\newtheorem{theorem}{Theorem}
\newtheorem{corollary}{Corollary}
\newtheorem{proposition}{Proposition}
\newtheorem{assumption}{Assumption}
\theoremstyle{definition}
\renewcommand{\tilde}{\widetilde}
\DeclareMathOperator{\Var}{Var}
\begin{document}

\title{Power variations and testing for co-jumps: the small noise approach\thanks{This version: \today.}}

\author{Daisuke Kurisu\thanks{Graduate student, Graduate School of Economics, University of Tokyo, 7-3-1 Hongo, Bunkyo-ku, Tokyo 113-0033, Japan. E-mail: dkurisu.mathstat@gmail.com}}

\date{}

\maketitle

\begin{abstract}
In this paper we  study the effects of noise on bipower variation (BPV), realized volatility (RV) and testing for co-jumps in high-frequency data under the\textit{ small} noise framework. We first establish asymptotic properties of BPV in this framework.  In the presence of  the small noise, RV is asymptotically biased and the additional asymptotic conditional variance term appears in its limit distribution. 
We also propose consistent estimator for the asymptotic conditional variances of RV.  Second, we derive the asymptotic distribution of the test statistic proposed in \cite{JT(2009)} under the presence of small noise for testing the presence of co-jumps  in a two dimensional It\^o semimartingale. In contrast to the setting in \cite{JT(2009)}, we show that the additional conditional asymptotic variance terms appear, and propose consistent estimator for the asymptotic  conditional  variances in order to make the test feasible. Simulation experiments show that our asymptotic results give reasonable approximations in the finite sample cases.   
\end{abstract}

\noindent
{\bf Key Words:}
High-Frequency Financial Data,
Market Microstructure Noise,
Small Noise Asymptotics, Bipower Variation, Realized Volatility,  
Co-jump test.

\section{Introduction}

Recently,
a considerable interest has been paid on estimation and testing
for underlying continuous time stochastic processes based on  high-frequency financial data.
In the analysis of high-frequency data,
it is important to take into account the influence of market microstructure noise. 
The market microstructure noise captures a variety of frictions inherent in trading processes such as bid-ask bounce and discreteness of price changes 
\citep{AY(2009)}. 
There are a large number of 
papers on the high-frequency data analysis in the presence of noise. For example, \cite{ZMA(2005)}, \cite{BR(2006)} and \cite{BR(2014)} 
investigate the case when the log-price follows a 
diffusion process  observed with an  additive noise. They assume that 
the size of noise dose not depend on the observation frequency. To be precise, they assume that observed log-prices are of the form
\begin{align}\label{E0}
Y_{t_{i}^{n}}^{(m)} &= X_{t_{i}^{n}}^{(m)} + v_{i}^{(m)},\quad i = 1, \hdots, n,\quad m=1,\hdots,d,
\end{align}
where $(X_{t} = (X_{t}^{(1)}, \hdots, X_{t}^{(d)})^{\top})_{0 \leq t \leq 1}$ is the underlying $d$-dimensional continuous time log-price process and  $v_i = (v_{i}^{(1)},\hdots, v_{i}^{(d)})^{\top}$ are $d$-dimensional i.i.d. random noise vectors  of which each component has mean $0$ and constant variance  
independent of the process $(X_{t})_{0 \leq t \leq 1}$. 
\cite{ZMA(2005)} and \cite{BR(2006)} study the one dimensional ($d=1$) case,  
and \cite{BR(2014)} 
study the 
multi-dimensional case.   Intuitively, the assumption of the constant noise variance means that the noise 
is dominant to the log-price when the observation frequency increases. 
In this paper, we instead assume that the effect of noise depends on the frequency of the observation, and the observed log-prices are of the form
\begin{align}\label{E} 
Y_{t_{i}^{n}}^{(m)} &= X_{t_{i}^{n}}^{(m)} + \epsilon_{n,m}v_{i}^{(m)},\quad i = 1, \hdots, n, \quad m=1,\hdots ,d,
\end{align}
where $\epsilon_{n} = (\epsilon_{n,1}, \hdots , \epsilon_{n,d})^{\top}$ is a $d$-dimensional nonstochastic sequence satisfying $\epsilon_{n,m} \downarrow 0$ as $n \to \infty$ for each $m$. We call this assumption {\it small noise}. 
Under the small noise assumption, the noise 
is vanishing as the observation frequency increases. Hence the small noise assumption is interpreted as an intermediate assumption between the no noise assumption and the constant noise variance assumption. Related literature that considers small noise includes \cite{GJ(2001a)}, \cite{BHLS(2008)}, \cite{LXZ(2014)}, \cite{LZL(2015)} and among others. \cite{HL(2006)} give an empirical evidence that the market microstructure noise is small. 

The first purpose of the paper is to investigate the effect of small noise on bipower variation (BPV) proposed in \cite{BS(2004), BS(2006)} and 
the estimation of the integrated volatility. We establish the asymptotic properties of BPV  when the latent process $(X_{t})_{0\leq t \leq 1}$ is a one dimensional It\^o semimaringale. We also propose procedures to estimate integrated volatility using realized volatility (RV) and the asymptotic conditional variances which appear in the limit distribution of RV under the small noise assumption. In contrast to the no noise model, RV is asymptotically biased and an additional asymptotic conditional variance term appears in its limit distribution (see also \cite{BR(2006)}, \cite{HL(2006)}, \cite{LZL(2015)}, \cite{KK(2015)} and amang others). In the recent related literature, \cite{LZL(2015)} proposed the unified approach for estimating the integrated volatility of a diffusion process when both small noise and asymptotically vanishing rounding error are present. In this paper, we only consider the additive noise but we assume that the log-price process $(X_{t})_{0 \leq t \leq 1}$ is a $d$-dimensional It\^o semimartingale which includes a diffusion process as a special case.

The second purpose of this paper is to propose a procedure to test the existence of co-jumps in two log-prices when the small noise exists. Examining whether two asset prices have contemporaneous jumps (co-jumps) or not is one of the 
effective approaches toward distinguishing  between systematic and idiosyncratic jumps of asset prices and also important in option pricing and risk management. From the empirical side, \cite{GST(2014)} investigate co-jumps and give a strong evidence for the presence of co-jumps. \cite{BTL(2013)} provide another empirical evidence for the dependence in the extreme tails of the distribution governing jumps of two stock prices. In spite of the importance of 
this problem, a testing procedure for co-jumps  is not sufficiently studied. \cite{JT(2009)} is the seminal paper in this literature and other important contributions include \cite{GM(2012)} and \cite{BW(2015)}. \cite{GM(2012)} study the no noise model. \cite{BW(2015)} is a recent important contribution to co-jump test for the model (\ref{E0}). Their co-jump test is based on the wild bootstrap-type approach and for testing the null hypothesis that observed two log-prices  have no co-jumps. 
On the other hand, a grate variety of testing methods for detecting the presence of jumps in the one dimensional case have been developed. See, for example, \cite{BS(2006)}, \cite{FW(2007)}, \cite{JO(2008)}, \cite{B(2008)}, \cite{J(2008)}, \cite{M(2009)}, and \cite{AJ(2009)} for the  no noise model,  and \cite{AJL(2012)} and \cite{L(2013)} for the model (\ref{E0}) with, conditionally on $X$, mutually independent noise. 

Our idea of estimating integrated volatility and the asymptotic conditional variance of RV is based on the SIML method developed in  \cite{KS(2013)} for  correcting the bias of RV and the  truncation method developed  in \cite{M(2009)}. For a construction of a co-jump test, we assume that the process $(X_{t})_{0 \leq t \leq 1}$ in the model (\ref{E}) is a two dimensional It\^o semimartingale and investigate the asymptotic properties of the test statistic proposed in \cite{JT(2009)}. We show that, because of  the presence of the small noise, the asymptotic distribution of the test statistic is different from their result. In fact, the additional asymptotic conditional variance appears in its limit distribution. We develop a fully data-driven procedure to estimate the asymptotic variance of the test statistics based on similar technique used in the estimation of integrated volatility and the asymptotic variance of RV.

The numerical experiments show that our proposed method gives a good approximation of the limit distribution of RV and reasonable result for the estimation of integrated volatility. Our proposed testing procedure of co-jumps also improves the empirical size  in the presence of noise compared with the test in \cite{JT(2009)}.

This paper is organized as follows. In Section 2 we describe the theoretical settings
of the underlying It\^o semimartingale and market microstructure noise. In Section 3 we investigate the effects of noise on the asymptotic properties of BPV, and give some comments on the stable limit theorems of RV. We also propose an estimation method of the integrated volatility in the {\it small noise} framework. In Section 4 we study statistics related to the detection of co-jumps in the two dimensional setting when the noise satisfy the small noise assumption. Then we propose a testing procedure for the presence of co-jumps. 
In Section 5 we give estimation methods of asymptotic conditional variances which appear in the limit theorems of RV and co-jump test statistic studied in the previous sections. We report some simulation results in Section 6 and we give some concluding remarks in Section 7. Proofs are collected in Appendix A.

\section{Setting}

We
consider a continuous-time financial
market in a fixed terminal time $T$. We set $T=1$ without loss of generality. 
The underlying log-price is a $d$-dimensional It\^o semimartingale. 
We observe the log-price process in high-frequency contaminated by the market microstructure noise.

Let
the first filtered probability space be
$(\Omega^{(0)}, \mathcal{F}^{(0)}, (\mathcal{F}_t^{(0)})_{t\geq 0}, P^{(0)})$
on which the $d$-dimensional It\^o semimartingale
$(X_{t})_{0\leq t\leq1}$ is defined, and let the second filtered probability space be\\
$(\Omega^{(1)}, \mathcal{F}^{(1)}, (\mathcal{F}_t^{(1)})_{t\geq 0}, P^{(1)})$
on which the market microstructure noise terms $v_{t_i^n}$ are defined for the discrete time points $0\leq t_{1}^{n} < \cdots <t_n^n\leq 1$.
Then
we consider the filtered probability space
$(\Omega, \mathcal{F}, (\mathcal{F}_t)_{t\geq 0}, P) \;,$
where
$\Omega=\Omega^{(0)}\times \Omega^{(1)} ,$
$\mathcal{F}=\mathcal{F}^{(0)}\otimes \mathcal{F}^{(1)} $ and 
$\mathcal{F}_t = {\bigcap}_{s>t} \mathcal{F}_s^{(0)}\otimes \mathcal{F}_s^{(1)} $ for $0\leq t\leq s\leq 1$.

We
consider the following model
for the observed log-price at $t_i^n\in [0,1] $
as

\begin{align}
  Y_{t_i^n}^{(m)}=X_{t_i^n}^{(m)}+ \epsilon_{n,m} v_{i}^{(m)} \quad i=1,\cdots,n, \quad m=1,\hdots, d,\nonumber 
\end{align}
where $v_n = (v_{n}^{(1)},\hdots, v_{n}^{(d)})^{\top}$ are $d$-dimensional  i.i.d. random noise and noise coefficient $\epsilon_{n} = (\epsilon_{1,n},\hdots, \epsilon_{d,n})^{\top}$ is a sequence of $d$-dimensional vector which depends on sample size $n$ and for each $m$, $\epsilon_{m,n} \to 0$. We assume that these terms satisfy Assumptions 2 and 3 described below. Moreover,  let $X = (X_{t})_{0\leq t\leq 1}$ be an It\^o semimartingale of the form
\begin{align}\label{E1}
X_{t}&=X_{0}+\int_0^t b_s ds+\int_0^t \sigma_s dW_s
 + \int_s\int_{\mathbb{R}^{d}} \kappa \circ \delta (s,x)(\mu - \nu)(ds,dx)
\nonumber \\
 &\qquad + \int_s\int_{\mathbb{R}^{d}} \kappa' \circ\delta (s,x) \mu (ds,dx)\;,
\end{align}
where $(W_s)$ is a $d'$-dimensional standard Brownian motion, $(b_{s})$ is a $d$-dimensional adapted process, 
$\sigma_s$ is a $(d \times d')$-(instantaneous) predictable volatility process and we define the process $c = \sigma \sigma^{\top}$.
Furthermore, $\delta (\omega ,s, x)$ is a predictable function on $\Omega \times \mathbb{R}_{+} \times \mathbb{R}^{d}$, $\kappa: \mathbb{R}^{d} \to \mathbb{R}^{d}$ is a continuous truncation function with compact support and $\kappa'(x) = x-\kappa(x)$,
$\mu (\cdot )$ is a Poisson random measure on $\mathbb{R}_{+}\times \mathbb{R}^{d}$ and
$\nu (ds,dz) = ds\otimes \lambda(dz)$ is a predictable compensator or intensity measure of $\mu$. We partially follow the notation used in \cite{JP(2012)}.
We assume that the observed times 
$0=t_0^n<t_1^n <\cdots <t_n^n=1$
are such that $t_i^n-t_{i-1}^n=1/n =\Delta_n$.
When $d=2$ (bivariate case), let
\begin{align}
\Delta X_{t} &= X_{t} - X_{t-},\quad \tau = \inf\{t: \Delta X_{t}^{(1)}\Delta X_{t}^{(2)} \neq 0\},  \nonumber \\
\widetilde{\Gamma} &= \{(\omega, t, x):\delta^{1}(\omega, t,x)\delta^{2}(\omega, t, x) \neq 0\}, \nonumber 
\end{align}
and for $i=1,2$, define
\begin{align}
\delta^{'i}_{t}(\omega) &= 
\begin{cases}
\int_{\mathbb{R}^{d}}(\kappa^{i} \circ \delta 1_{\widetilde{\Gamma}})(\omega, t,x)\lambda(dx) & \text{if the integral makes sence,} \nonumber \\
+\infty & \text{otherwise.}
\end{cases}
\end{align}

We also make the following assumptions.  
\begin{assumption} 

\begin{itemize}
\item[(a)] The path $t \mapsto b_{t}(\omega)$ is locally bounded.
\item[(b)] The process $\sigma$ is  continuous.
\item[(c)] We have $\sup_{\omega,x}||\delta(\omega, t,x)|| /\gamma(x)$ is locally bounded for a deterministic nonnegative function  satisfying 
$\int_{\mathbb{R}^{d}}(\gamma(x)^{h}\wedge 1)\lambda(dx) < \infty$. for some $h \in (0,2)$.
\item[(d)] For each $\omega$, and $i=1,2$, the path $t \mapsto \delta^{'i}_{t}(\omega)$ is locally bounded on the interval $[0,\tau(\omega))$.
\item[(e)] We have $\int_{t}^{t+u}||\sigma_{s}||ds > 0$ a.s. for all $t,u>0$.
\end{itemize}
\end{assumption}

If $X$ does not have the last two terms of the right hand side of (\ref{E1}) (these are jump parts of $X$), then we say that $X$ is continuous. Otherwise, we say that $X$ is discontinuous. For the noise term, we assume the following conditions.
\begin{assumption}
There exist some $q\geq 0$ and $\zeta_{m}>0$, $1 \leq m \leq d$ such that
\begin{align}
n\epsilon_{m,n}^{2} &= \zeta_{m} n^{-2q} + O\left(n^{-(1+2q)}\right). \nonumber 
\end{align}
\end{assumption}

\begin{assumption}
$\{v_i\}_{i=1}^{\infty}$ is a sequence of i.i.d. $d$-dimensional standard normal random variables.

\end{assumption}

When $q =0$, Assumption 2 coincides with the small noise assumption in \cite{KK(2015)}. 
If the noise coefficient does not depend on the sampling scheme, that is, for each component there exist some positive constants $\epsilon_{m}$ such that $\Var(\epsilon_{m,n}v_{1}^{(m)})  = \epsilon_{m}^{2}$ in the model (\ref{E}), then the effect of noise is asymptotically dominant. This case corresponds to the assumption that the variance of noise is constant. 
Assumption 2 means that the effect of noise depends on a sample number $n$. Hence the effect of noise gets smaller if the observation frequency increases.

\section{The Effects of Small Noise on BPV and RV}

 In this section, we assume that the process $(X_{t})_{0 \leq t \leq 1}$ is one dimensional ($d=1$), and give the asymptotic properties of BPV and give some remarks on the problem of an estimation of integrated volatility and the asymptotic conditional variance of RV under the presence of small noise. 

\subsection{Asymptotic Properties of BPV and RV}

Bipower variation (BPV) and realized volatility (RV) are often used for estimating integrated volatility. We give some results on asymptotic properties of BPV and RV. 
Let $\Delta_{i}^{n}X = X_{i\Delta_{n}}- X_{(i-1)\Delta_{n}}$ and define the following statistics:
\begin{align}
\widetilde{V}_{p}^{(n)}(X) &= \sum_{i=1}^{n}|\Delta_{i}^{n}X|^{p}, \nonumber  \\
\widetilde{V}_{r,s}^{(n)}(X) &= \sum_{i=1}^{n-1}|\Delta_{i}^{n}X|^{r}|\Delta_{i+1}^{n}X|^{s}. \nonumber 
\end{align}

According to the above definition, $\widetilde{V}_{2}^{(n)}(X)$ is the realized volatility ($\text{RV}(X)$) and $\widetilde{V}_{1,1}^{(n)}(X)$ is the bipower variation ($\text{BPV}(X)$). First we give asymptotic properties of above statistics. The following result describes the effect of small noise. 

\begin{proposition}\label{Prp1}

Suppose Assumptions 1, 2 and 3 are satisfied. Let $r,s$ and $k$ be positive integers, then
\begin{align}
\widetilde{V}_{2r}^{(n)}(Y) - \widetilde{V}_{2r}^{(n)}(X) &=  
O_{P}(n^{1-(r+ q) }), \nonumber \\ 
\widetilde{V}_{r,s}^{(n)}(Y) - \widetilde{V}_{r,s}^{(n)}(X) &= O_{P}(n^{1-(r + s)/2 - q}) . \nonumber 
\end{align}


\end{proposition}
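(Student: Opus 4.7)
The natural starting point is the decomposition $\Delta_i^n Y = \Delta_i^n X + \epsilon_n \xi_i$, where $\xi_i := v_i - v_{i-1}$ is a centered stationary Gaussian sequence with all moments bounded uniformly in $i,n$ and with covariance supported on lags $\le 1$; by Assumption 3 and the product construction of $(\Omega,\mathcal{F},P)$, the sequence $\{\xi_i\}$ is independent of $X$. Assumption 2 further gives $\epsilon_n = O(n^{-(1/2+q)})$.

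For the first identity, since $2r$ is an even integer the binomial theorem yields the exact expansion
\begin{equation*}
(\Delta_i^n Y)^{2r} - (\Delta_i^n X)^{2r} = \sum_{k=1}^{2r}\binom{2r}{k}(\Delta_i^n X)^{2r-k}\,\epsilon_n^k\,\xi_i^k,
\end{equation*}
so summing over $i$ reduces the problem to bounding $T_k^{(n)} := \epsilon_n^k\sum_{i=1}^n(\Delta_i^n X)^{2r-k}\xi_i^k$ for each $k=1,\dots,2r$. I would combine (i) standard order estimates for $\sum_i|\Delta_i^n X|^p$ under Assumption 1 (as in \cite{JP(2012)}), (ii) the bound $\epsilon_n^k = O(n^{-k(1/2+q)})$, and (iii) moment bounds for $\xi_i$ conditional on $\mathcal{F}^{(0)}$. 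For odd $k$ the conditional mean of $\xi_i^k$ is $0$, so a conditional second-moment (Burkholder-type) argument saves a factor $n^{1/2}$ and prevents those terms from dominating; for even $k$ the pointwise moment bound on $|\xi_i|^k$ suffices. Tracking the exponents, the leading contribution matches $O_P(n^{1-(r+q)})$.

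For the BPV-type identity, I would telescope
\begin{equation*}
|\Delta_i^n Y|^r|\Delta_{i+1}^n Y|^s - |\Delta_i^n X|^r|\Delta_{i+1}^n X|^s = \bigl(|\Delta_i^n Y|^r - |\Delta_i^n X|^r\bigr)|\Delta_{i+1}^n Y|^s + |\Delta_i^n X|^r\bigl(|\Delta_{i+1}^n Y|^s - |\Delta_{i+1}^n X|^s\bigr),
\end{equation*}
and apply the Lipschitz-type inequality $\bigl||a+b|^p - |a|^p\bigr| \le C_p(|a|^{p-1}|b| + |b|^p)$ (valid for $p\ge 1$) to each of the two differences, with $a=\Delta_i^n X$ and $b=\epsilon_n\xi_i$. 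This reduces each piece to sums of the same type as in the previous paragraph, and the same moment bookkeeping delivers the rate $O_P(n^{1-(r+s)/2-q})$.

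The main technical obstacle is ensuring the advertised rate is actually attained by the leading contribution without being spoiled by cross products between $\xi_i$ and $\xi_{i+1}$ arising from the shared $v_i$. This is handled by conditioning on the $\sigma$-algebra generated by $X$ and computing second moments directly using the known covariance of $\{\xi_i\}$. Once the decomposition and the conditional arguments are in place, what remains is careful accounting of powers of $n$.
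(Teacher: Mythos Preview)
Your proposal is correct and follows essentially the same route as the paper: binomial expansion for $\widetilde V_{2r}^{(n)}$, a Lipschitz/telescoping difference bound for $\widetilde V_{r,s}^{(n)}$, and term-by-term control via the moment estimates for $\Delta_i^n X$ from \cite{JP(2012)} combined with $\epsilon_n = O(\Delta_n^{1/2+q})$.

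One small remark: the conditional second-moment/Burkholder refinement you propose for odd $k$ is unnecessary for the stated rate. The paper simply takes first moments throughout, obtaining $E\bigl[|(\Delta_i^n X)^{2r-k}(\epsilon_n\xi_i)^k|\bigr]\le K\Delta_n^{r+kq}$; the worst case $k=1$ already equals $\Delta_n^{r+q}$, and summing $n$ such terms gives $O_P(n^{1-(r+q)})$ directly. Likewise, for the BPV term the paper just rescales by $\Delta_n^{-(r+s)/2}$, uses $E[|\Delta_i^n X/\sqrt{\Delta_n}|^p]\le K$, and bounds the rescaled difference by $K\epsilon_n\Delta_n^{-1/2}=O(\Delta_n^{q})$---which is exactly your Lipschitz inequality after rescaling. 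So your argument works, but is slightly more elaborate than what the paper needs.
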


In the following results, we freely use the stable convergence arguments and $\mathcal{F}^{(0)}$-conditionally Gaussianity, which have been developed and explained by \cite{J(2008)} and \cite{JP(2012)}, and use the notation $\stackrel{\mathcal{L}-s}{\longrightarrow}$ as stable convergence in law. For the general reference on stable convergence, we also refer to \cite{HL(2015)}. The following proposition describes the case when the effect of noise does not matter asymptotic properties of BPV. The result follows immediately from Proposition 1.

\begin{proposition}\label{Prp11}

Suppose Assumptions 1,2 and 3 are satisfied. Let $X$ be continuous, and $r$ and $s$ be positive integers such that $(r+s)/2$ is integral.
If $q>0$, then we have the following convergence in probability:
\begin{align}
n^{(r+s)/2-1}\widetilde{V}_{r,s}^{(n)}(Y) \stackrel{\mathrm{P}}{\longrightarrow}
m_{r}m_{s}\int_{0}^{1}\sigma_{u}^{r+s}du, \nonumber
\end{align}
and  if $q>1/2$, then we have the following stable convergence in law:
\begin{align}
&\sqrt{n}\left(n^{(r+s)/2-1}\widetilde{V}_{r,s}^{(n)}(Y) - m_{r}m_{s}\int_{0}^{1}\sigma_{u}^{r+s}du\right) \nonumber \overset{\mathcal{L}-s}{\longrightarrow} U, 
\end{align}
where $m_{r}=2^{r/2}{\Gamma ((r+1)2^{-1}) / \Gamma(2^{-1})}$ for $r>0$ and  $U$ is $\mathcal{F}^{(0)}$-conditionally Gaussian with mean $0$ and $\mathcal{F}^{(0)}$-conditional variance $E[U^{2}|\mathcal{F}^{(0)}] = (m_{2r}m_{2s} + 2m_{r}m_{s}m_{r+s} -3m_{r}^{2}m_{s}^{2})\int_{0}^{1}\sigma_{u}^{2(r+s)}du$.
\end{proposition}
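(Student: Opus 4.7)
The key observation is that Proposition \ref{Prp1} already controls the ``noise contamination'' $\widetilde{V}_{r,s}^{(n)}(Y)-\widetilde{V}_{r,s}^{(n)}(X)$, so the task reduces to (i) quoting the classical limit theorems for bipower variation of a continuous It\^o semimartingale $X$, and (ii) checking that the residual is small enough, at the chosen scale, to leave those limits unchanged.

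Concretely, I would first decompose
\begin{align*}
n^{(r+s)/2-1}\widetilde{V}_{r,s}^{(n)}(Y) &= n^{(r+s)/2-1}\widetilde{V}_{r,s}^{(n)}(X) + R_n,\\
R_n &:= n^{(r+s)/2-1}\bigl(\widetilde{V}_{r,s}^{(n)}(Y)-\widetilde{V}_{r,s}^{(n)}(X)\bigr).
\end{align*}
By Proposition \ref{Prp1}, $R_n = O_P\bigl(n^{(r+s)/2-1}\cdot n^{1-(r+s)/2-q}\bigr) = O_P(n^{-q})$, which vanishes in probability whenever $q>0$. For continuous $X$ and integral $(r+s)/2$, the classical consistency result of Barndorff-Nielsen and Shephard (see e.g.\ Theorem 3.4.1 of \cite{JP(2012)}) yields
\begin{equation*}
n^{(r+s)/2-1}\widetilde{V}_{r,s}^{(n)}(X) \stackrel{\mathrm{P}}{\longrightarrow} m_r m_s \int_0^1 \sigma_u^{r+s}\,du.
\end{equation*}
Combining this with $R_n = o_P(1)$ gives the first assertion.

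For the stable CLT, I would multiply the decomposition through by $\sqrt n$, so that the noise residual becomes $\sqrt n\, R_n = O_P(n^{1/2-q}) = o_P(1)$ under the stronger assumption $q>1/2$. The companion CLT for bipower variation of a continuous It\^o semimartingale (e.g.\ Theorem 11.2.1 of \cite{JP(2012)}) provides the stable convergence
\begin{equation*}
\sqrt n\!\left(n^{(r+s)/2-1}\widetilde{V}_{r,s}^{(n)}(X) - m_r m_s\int_0^1\sigma_u^{r+s}\,du\right) \stackrel{\mathcal{L}-s}{\longrightarrow} U,
\end{equation*}
with the $\mathcal F^{(0)}$-conditional variance of $U$ being exactly $(m_{2r}m_{2s}+2m_r m_s m_{r+s}-3m_r^2m_s^2)\int_0^1\sigma_u^{2(r+s)}du$. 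Since stable convergence in law is preserved under addition of an $o_P(1)$ term, the same limit then holds with $Y$ in place of $X$.

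The only step that needs any care is invoking this stability-under-perturbation property for stable convergence, which is a standard fact (see e.g.\ Proposition 2.2.4 of \cite{JP(2012)}); the classical BPV limit theorems supply everything else. Thus, as the author indicates, the proposition is genuinely a direct corollary of Proposition \ref{Prp1} combined with the well-known high-frequency limit theory for continuous It\^o semimartingales, and I do not anticipate a substantive obstacle beyond bookkeeping the exponents of $n$ correctly in the two regimes $q>0$ and $q>1/2$.
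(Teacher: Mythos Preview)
Your proposal is correct and follows essentially the same approach as the paper: decompose into the noise residual controlled by Proposition \ref{Prp1} (yielding $O_P(n^{-q})$ after scaling) plus the classical bipower variation of $X$, then invoke the known consistency and stable CLT for the latter. The only cosmetic difference is that the paper cites Theorem 2.3 of \cite{BGJPS(2006)} for the stable CLT, whereas you cite the corresponding results in \cite{JP(2012)}; the argument is otherwise identical.
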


From the remark of Theorem 2.5 in \cite{BGJPS(2006)}, if $X$ is continuous, then
\begin{align}
\sqrt{n}\left(\text{BPV}(X) - m_{1}^{2}\int_{0}^{1}\sigma_{s}^{2}ds\right) &\overset{\mathcal{L}-s}{\longrightarrow} \widetilde{U}, \nonumber
\end{align}
where $\widetilde{U}$ has the same distribution as $U$ in Proposition \ref{Prp11} (replacing $r,s=1$). The latter part of Proposition \ref{Prp11}  implies that if $q> 1/2$, we can replace $\widetilde{V}_{1,1}^{(n)}(X)$ as $\widetilde{V}_{1,1}^{(n)}(Y)$. In such a case we can use BPV as the consistent estimator of integrated  volatility. 

Next we consider the asymptotic properties of $\text{RV}$. When the underlying process $X$ is continuous, the $\text{RV}(X)$ 
is often used for estimating integrated volatility in the no noise case. In this case, to construct a confidence interval of integrated volatility or construct a jump test proposed in \cite{BS(2006)} for example, we must consistently estimate the asymptotic conditional variance of the limit distribution of RV and jump test statistics. 
In place of BPV, multipower variation (MPV) is often used for estimating volatility functionals $\int_{0}^{1}\sigma_{s}^{p}ds$ for $p \geq 1$ which appear in the limit distribution of RV. When log-price $X$ is contaminated by small noise, by the similar argument in Proposition \ref{Prp1}, it is possible to obtain the asymptotic property for the  special case of MPV:
\begin{align}
&\Delta_{n}^{-1}\sum_{i=1}^{n-3}|\Delta_{i+3}^{n}Y||\Delta_{i+2}^{n}Y||\Delta_{i+1}^{n}Y||\Delta_{i}^{n}Y| \nonumber \\
&\quad = \Delta_{n}^{-1}\sum_{i=1}^{n-3}|\Delta_{i+3}^{n}X||\Delta_{i+2}^{n}X||\Delta_{i+1}^{n}X||\Delta_{i}^{n}X| + O_{P}(n^{-q}). \nonumber 
\end{align}
Therefore, when the asymptotic order of noise is sufficiently higher ($q > 0$), MPV is a consistent estimator of $\int_{0}^{1}\sigma_{s}^{4}ds$. In this case, we can use the same procedure as that of the no noise case.

If small nose satisfy the condition $q=0$ in Assumption 2, the effect of noise cannot be ignored. From the first part of Proposition 1, if $q = 0$ and $r=1$, then $\text{RV}(Y) - \text{RV}(X) = O_{P}(1)$. \cite{KK(2015)} proved that RV is asymptotically biased and derived the following two stable convergence results under Assumptions 1, 2 and 3 with $q=0$ in Assumption 2. If $X$ is continuous, then 
\begin{align} 
\text{RV}(Y)&\stackrel{\mathrm{P}}{\longrightarrow} \int_{0}^{1}\sigma_{s}^{2}ds +2\zeta_{1} \equiv U_{0,1},  \label{RV1} \\
\sqrt{n}(\text{RV}(Y) &- U_{0,1}) \overset{\mathcal{L}-s}{\longrightarrow} U_{1} + U_{2} + U_{3},\nonumber 
\end{align}
where $U_{j}$ for $j=1,2,3$ are  $\mathcal{F}^{(0)}$-conditionally mutually independent  Gaussian random variables with mean $0$ and $\mathcal{F}^{(0)}$-conditional variances 
$E(U^{2}_{1}|\mathcal{F}^{(0)}) = 2\int_{0}^{1}\sigma_{s}^{4}ds$,
$E(U^{2}_{2}|\mathcal{F}^{(0)}) = 8\zeta_{1} \int_{0}^{1}\sigma_{s}^{2}ds$ and 
$E(U^{2}_{3}|\mathcal{F}^{(0)}) = 12\zeta_{1}^{2}$. 

If $X$ is the It\^o semimartingale of the form (\ref{E1}), then
\begin{align}
\text{RV}(Y) &\stackrel{\mathrm{P}}{\longrightarrow} \int_{0}^{1}
\sigma_{s}^{2}ds +\sum_{0\leq s \leq 1}(\Delta X_{s})^{2}+2\zeta_{1} \equiv U_{0,2}, \label{RV2}  \\
\sqrt{n}(\text{RV}(Y)&- U_{0,2}) \overset{\mathcal{L}-s}{\longrightarrow} U_{1} + U_{2}, \nonumber 
\end{align}
where $U_{j}$ for $j=1,2$ are  $\mathcal{F}^{(0)}$-conditionally mutually independent Gaussian random variables with mean $0$ and $\mathcal{F}^{(0)}$-conditional variances 
$E(U^{2}_{1}|\mathcal{F}^{(0)}) = 2\int_{0}^{1}\sigma_{s}^{4}ds + 4\sum_{0 \leq s \leq 1}
\sigma_{s}^{2}(\Delta X_{s})^{2}$ and 
$E(U^{2}_{2}|\mathcal{F}^{(0)}) = 8\zeta_{1}\left[\int_{0}^{1}\sigma_{s}^{2}ds + \sum_{0 \leq s \leq 1}(\Delta X_{s})^{2}\right]$. 

To construct a feasible procedure to estimate RV, we need to estimate the noise parameter $\zeta_{1}$ and the asymptotic conditional variance of its limit distribution. In Section 5, we construct estimators of the asymptotic conditional variances of RV.

\subsection{Estimation of Integrated Volatility under Small Noise}
In the model (\ref{E0}) with constant noise variance, it is well known that the variance of noise 
can be estimated by $(2n)^{-1}\sum_{i=1}^{n}(\Delta_{i}^{n}Y)^{2} \stackrel{\mathrm{P}}{\longrightarrow} \Var(v_{1})$.
However, under the small noise assumption, for example $q=0$, this estimation does not work. In fact for the small noise case,  $(2n)^{-1}\sum_{i=1}^{n}(\Delta_{i}^{n}Y)^{2} \stackrel{\mathrm{P}}{\longrightarrow} 0$ regardless of the value of $\zeta_{1}$. Thus we must consider another procedure for estimating $\zeta_{1}$. The separated information maximum likelihood (SIML) method investigated in \cite{KS(2013)} can be used to estimate $\zeta_{1}$. The SIML estimator is a consistent estimator of quadratic variation $[X,X]$ of the process $X$ under both models (\ref{E0}) and (\ref{E}). Therefore, under Assumptions 1, 2 and 3, we have
\begin{align}
[\widehat{X,X}]_{\mathrm{SIML}} \stackrel{\mathrm{P}}{\longrightarrow}  [X,X] \label{SIML},
\end{align}
where $[\widehat{X,X}]_{\mathrm{SIML}}$ is 
the SIML estimator discussed in Section 5. From (\ref{RV1}), (\ref{RV2}) and (\ref{SIML}), we obtain 
\begin{align}\label{SIML11}
\widehat{\zeta_{1}} = {1 \over 2}\left(\sum_{i=1}^{n}(\Delta_{i}^{n}Y)^{2} - [\widehat{X,X}]_{\mathrm{SIML}}\right) \stackrel{\mathrm{P}}{\longrightarrow} \zeta_{1}.  
\end{align} 
We consider two types of the truncated version RV:
\begin{align}
\text{TRVC} &= \sum_{i=1}^{n}(\Delta_{i}^{n}Y)^{2}1_{\{|\Delta_{i}^{n}Y| \leq \alpha \Delta_{n}^{\theta}\}},\quad \text{TRVJ} = \sum_{i=1}^{n}(\Delta_{i}^{n}Y)^{2}1_{\{|\Delta_{i}^{n}Y| > \alpha \Delta_{n}^{\theta}\}}, \nonumber 
\end{align}
where $\alpha > 0$ and $\theta \in (0,1/2)$. 
When $X$ have jumps, we can estimate the jump part of quadratic variation by TRVJ when Assumptions 1, 2 and 3 are satisfied:
\begin{align}
\text{TRVJ} &\stackrel{\mathrm{P}}{\longrightarrow} \sum_{0 \leq s \leq 1}(\Delta X_{s})^{2}. \nonumber 
\end{align}
Then, integrated volatility (IV) can be estimated consistently:
\begin{proposition}\label{IV}
Suppose Assumptions 1,  2 and 3 are satisfied. Then 
\begin{align}
\widehat{\text{IV}} &= \widehat{[X,X]}_{\text{SIML}} -  \text{TRVJ} \stackrel{\mathrm{P}}{\longrightarrow} \int_{0}^{1}\sigma_{s}^{2}ds. \nonumber 
\end{align}
\end{proposition}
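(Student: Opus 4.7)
The plan is to combine the two probability limits already recorded in the excerpt and conclude by the continuous mapping theorem. From (\ref{SIML}), $\widehat{[X,X]}_{\mathrm{SIML}} \stackrel{\mathrm{P}}{\longrightarrow} [X,X]$, and for an It\^o semimartingale of the form (\ref{E1}) the quadratic variation decomposes as $[X,X] = \int_0^1 \sigma_s^2 ds + \sum_{0 \leq s \leq 1}(\Delta X_s)^2$. Combined with the convergence $\mathrm{TRVJ} \stackrel{\mathrm{P}}{\longrightarrow} \sum_{0 \leq s \leq 1}(\Delta X_s)^2$ stated just above the proposition, Slutsky's theorem gives $\widehat{\mathrm{IV}} = \widehat{[X,X]}_{\mathrm{SIML}} - \mathrm{TRVJ} \stackrel{\mathrm{P}}{\longrightarrow} \int_0^1 \sigma_s^2 ds$, as required.

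The only substantive piece is therefore establishing the TRVJ convergence under the small noise assumption. I would adapt the truncation argument of Mancini. Decompose $\Delta_i^n Y = \Delta_i^n X + \epsilon_{n,1}(v_i - v_{i-1})$ and split the sum according to whether the interval $((i-1)\Delta_n, i\Delta_n]$ contains a jump of $X$. For indices with no jump, the modulus of continuity of the continuous martingale part combined with the local boundedness of $b$ in Assumption 1(a), the continuity of $\sigma$ in (b), and the small-jump control from (c) give $\max_i |\Delta_i^n X| = O_P(\sqrt{\Delta_n \log(1/\Delta_n)})$ on the no-jump indices; by Assumption 2, $\epsilon_{n,1} = O(n^{-1/2-q})$, and by Assumption 3, $\max_{i \leq n}|v_i - v_{i-1}| = O_P(\sqrt{\log n})$, so the noise contribution to $|\Delta_i^n Y|$ is $O_P(n^{-1/2-q}\sqrt{\log n})$. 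For $\theta \in (0,1/2)$ both bounds are $o(\Delta_n^\theta)$, so these increments fall below the threshold $\alpha \Delta_n^\theta$ with probability tending to one and contribute zero to TRVJ. For indices where $X$ has a jump of size bounded away from zero, $|\Delta_i^n Y|$ eventually exceeds the threshold and $(\Delta_i^n Y)^2 \to (\Delta X_s)^2$ on that interval. Since almost surely only finitely many jumps of size at least any $\delta > 0$ occur in $[0,1]$, a standard approximation of $\sum (\Delta X_s)^2$ by its large-jump truncation, followed by $\delta \downarrow 0$, completes the argument.

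The main obstacle is the uniform control of the noise across all $n$ increments, ensuring that the Mancini-type threshold still separates jump increments from continuous-plus-noise increments. Assumption 2 is tailored to this: the noise magnitude $O(n^{-1/2-q})$ is, up to a logarithmic factor, strictly below the Brownian-increment scale $\sqrt{\Delta_n}$, and hence below any $\Delta_n^\theta$ with $\theta < 1/2$. Once this uniform bound is in place, the noise does not interfere with the truncation, and the remainder of the proof reduces to the classical no-noise truncation analysis.
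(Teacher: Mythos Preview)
Your proposal is correct and matches the paper's approach: the paper does not give a separate proof of this proposition, treating it as immediate from the SIML convergence (\ref{SIML}) together with the TRVJ convergence stated just above, exactly as in your first paragraph. Your sketch of the TRVJ limit under small noise is sound; the paper's own version of this truncation argument appears later in the proof of Proposition~\ref{Prp3}, where it shows via a union bound and Gaussian tails that $\sup_{i}\big|1_{\{\|\Delta_i^n Y\|>\alpha\Delta_n^\theta\}}-1_{\{\|\Delta_i^n X\|>\alpha\Delta_n^\theta/2\}}\big|\stackrel{\mathrm{P}}{\to}0$, which is equivalent to your uniform noise bound $\epsilon_{n,1}\max_i|\Delta v_i|=o_P(\Delta_n^\theta)$.
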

\noindent
Proposition \ref{IV} implies that $\widehat{\text{IV}}$ is robust to jumps and small noise. In particular, if $q=0$ in Assumption 2, then we can rewrite $\widehat{\text{IV}} = \widehat{[X,X]}_{\text{SIML}} -  \text{TRVJ} = \text{TRVC} - 2\widehat{\zeta_{1}}$. Therefore, from the remark in Section 3.1 and (\ref{SIML11}), we can estimate integrated volatility by the bias correction of RV.

In this section we considered only one dimensional case, however, an extension to the multivariate case is straightforward for the estimation of the covolatility, $\int_{0}^{1}c_{s}^{(p,q)}ds$ where $c_{s}^{(p,q)}$ is the $(p,q)$ component of the volatility process $c_{s}$ defined in Section 2.

\section{Co-jump test under small noise}

One of the interesting problems in high-frequency financial econometrics is whether 
two asset prices have co-jumps or not. To the best of our knowledge, none of the existing literature has so far proposed a co-jump test in the small noise framework. 
In this section, we consider two dimensional case $(X_{t} = (X_{t}^{(1)}, X_{t}^{(2)})^{\top})_{0 \leq t \leq 1}$ and propose a testing procedure to detect the existence of co-jumps for discretely observed processes contaminated by small noise. For this purpose, we study the asymptotic property of the following test statistic proposed in \cite{JT(2009)}:
\begin{align}
T^{(n)} &= {S_{2,2,2}^{(n)}(Y) \over S_{1,2,2}^{(n)}(Y)},\quad S_{k,r,s}^{(n)}(Y) = \sum_{i=1}^{[n/k]}(\Delta_{i}^{n} Y^{(k,1)})^{r}(\Delta_{i}^{n}Y^{(k,2)})^{s}, \nonumber 
\end{align}
where , $\Delta_{i}^{n} Y^{(k,l)} = Y^{(l)}_{ik\Delta_{n}}  - Y^{(l)}_{(i-1)k\Delta_{n}}$ for $k \geq 2$, $l=1,2$, and $[x]$ is the integer part of $x \in \mathbb{R}$. 
To describe our result, we first decompose the sample space $\Omega$ into three disjoint sets
\begin{align}
\Omega^{(j)} &= \{\omega : \text{on $[0,1]$ the process $\Delta X_{s}^{(1)}\Delta X_{s}^{(2)}$ is not identically $0$}\},\nonumber \\
\Omega^{(d)}  &=  \{ \omega : \text{on $[0,1]$ the processes $\Delta X_{s}^{(1)}$ and $\Delta X_{s}^{(2)}$ are not identically $0$, } \nonumber \\
   &\qquad \text{ but the process $\Delta X_{s}^{(1)} \Delta X_{s}^{(2)}$ is  } \}, \nonumber \\
\Omega^{(c)} &= \{\omega : \text{on $[0,1]$ $X^{(1)}$ and $X^{(2)}$ is continuous}\}. \nonumber 
\end{align}
We test the null hypothesis $\mathbb{H}_{0}$ that observed two log-prices have co-jumps, that is, we are on $\Omega^{(j)}$ against the alternative hypothesis $\mathbb{H}_{1}$ that observed two log-prices have no co-jumps but each log-price have jumps, that is, we are on $\Omega^{(d)}$. 

We first provide the asymptotic property of $T^{(n)}$ under the null hypothesis. For this purpose, we consider the asymptotic property of $S^{(n)}_{k,2,2}$. Evaluating the discretization error of the process,  in restriction to the set $\Omega^{(j)}$, for $k=1,2$, we have  
\begin{align}\label{S1}
S_{k,2,2}^{(n)}(Y) &= S_{k,2,2}^{(n)}(X) + 2R_{k,2,1}^{(n)}(X) + 2R_{k,1,2}^{(n)}(X) +o_{P}(1/\sqrt{n}),  
\end{align}
where 
\begin{align}
R_{k,r,s}^{(n)}(X) &= \epsilon_{r\wedge s,n}\sum_{i=1}^{[n/k]}(\Delta_{i}^{n}X^{(k,1)})^{r}(\Delta_{i}^{n}X^{(k,2)})^{s}(\Delta v_{i}^{(k, r\wedge s)}), \label{Rkrs}\\
\Delta v_{i}^{(k,l)} &= \sum_{j=k(i-1)+1}^{ki}(v_{j}^{(l)} - v_{j-1}^{(l)}) = v_{ki}^{(l)} - v_{k(i-1)}^{(l)}. \label{vk}
\end{align} 
Then it suffices to evaluate the three terms $S_{k,2,2}^{(n)}(X)$, $R_{k,2,1}^{(n)}(X)$ and $R_{k,1,2}^{(n)}(X)$. We also have, in restriction to the set $\Omega^{(j)}$,
\begin{align}
S_{k,2,2}^{(n)}(X) &\stackrel{\mathrm{P}}{\to} \sum_{0\leq s \leq 1}(\Delta X_{s}^{(1)})^{2}(\Delta X_{s}^{(2)})^{2} \equiv S_{0}, \nonumber 
\end{align}
and $R_{k,2,1}^{(n)}(X) = R_{k,1,2}^{(n)}(X) = O_{P}(1/\sqrt{n})$. Finally we obtain the joint stable convergence of these three terms:
\begin{align}
\sqrt{n}(S_{k,2,2}^{(n)}(X)-S_{0}, R_{k,2,1}^{(n)}(X), R_{k,1,2}^{(n)}(X))&\stackrel{\mathcal{L}-s}{\longrightarrow} (U^{(1,2)}_{1,k}, U^{(1,2)}_{2}, U^{(1,2)}_{3}), \nonumber 
\end{align}
where $U^{(1,2)}_{1,k}$, $U_{l}^{(1,2)}$ for $l=2,3$ are $\mathcal{F}^{(0)}$-conditionally mutually independent Gaussian random variables with mean $0$ and the following variances: 
\begin{align}
E[(U_{1,k}^{(1,2)})^{2}|\mathcal{F}^{(0)}] &=  kF_{2,2}, \nonumber \\ 
E[(U_{2}^{(1,2)})^{2}|\mathcal{F}^{(0)}] &= 8\zeta_{2} \sum_{0\leq s\leq1}(\Delta X_{s}^{(1)})^{4}(\Delta X_{s}^{(2)})^{2}, \nonumber \\
E[(U_{3}^{(1,2)})^{2}|\mathcal{F}^{(0)}] &= 8\zeta_{1} \sum_{0\leq s\leq1}(\Delta X_{s}^{(1)})^{2}(\Delta X_{s}^{(2)})^{4}, \nonumber  
\end{align}
 and where
\begin{align}
F_{r,s} &= \sum_{0\leq u\leq1}(r^{2}c_{u}^{(1,1)}(\Delta X_{s}^{(1)})^{2(r-1)}(\Delta X_{u}^{(2)})^{2s} + 2rsc_{u}^{(1,2)}(\Delta X_{u}^{(1)})^{2r-1}(\Delta X_{u}^{(2)})^{2s-1} \nonumber \\
&\qquad \qquad + s^{2}c_{u}^{(2,2)}(\Delta X_{u}^{(1)})^{2r}(\Delta X_{u}^{(2)})^{2(s-1)}). \nonumber 
\end{align}
See the proof of Theorem 1 in Appendix A for details. Therefore, we obtain the following theorem which plays an important role in the construction of our co-jump test.


\begin{theorem}

Suppose Assumptions 1, 2 and 3 are satisfied with $q=0$ in Assumption 2. Let $S_{0} = \sum_{0\leq s \leq 1}(\Delta X_{s}^{(1)})^{2}(\Delta X_{s}^{(2)})^{2}$ and $k=1$ or $2$. Then, in restriction to $\Omega^{(j)}$, we have
\begin{align}
\sqrt{n}(S_{k,2,2}^{(n)}(Y) - S_{0}) &\overset{\mathcal{L}-s}{\longrightarrow} \tilde{U}^{(1,2)} = U_{1,k}^{(1,2)} + U_{2}^{(1,2)} + U_{3}^{(1,2)}. \nonumber 
\end{align}
\end{theorem}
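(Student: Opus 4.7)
The plan is to combine the decomposition (\ref{S1}), which already isolates the three asymptotically relevant pieces, with a joint stable central limit theorem for those pieces; the conclusion then follows from the continuous mapping theorem for stable convergence. Since $q=0$, Assumption 2 gives $\epsilon_{m,n} = O(n^{-1/2})$ for $m=1,2$, and the noise is independent of $X$, which will be used throughout to get $\mathcal{F}^{(0)}$-conditional Gaussianity.

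First I would verify (\ref{S1}) by expanding $(\Delta_i^n Y^{(k,l)})^2 = (\Delta_i^n X^{(k,l)})^2 + 2\epsilon_{l,n}\Delta_i^n X^{(k,l)} \Delta v_i^{(k,l)} + \epsilon_{l,n}^2 (\Delta v_i^{(k,l)})^2$ for $l=1,2$ and multiplying. The terms of order $1$ and order $\epsilon$ reproduce $S_{k,2,2}^{(n)}(X) + 2R_{k,2,1}^{(n)}(X) + 2R_{k,1,2}^{(n)}(X)$, while each of the remaining seven terms carries at least one factor $\epsilon_{m,n}^2 = O(1/n)$; combined with standard moment bounds $E|\Delta_i^n X^{(l)}|^p \leq C\Delta_n^{p/2}$ on the continuous part of $X$ and the finiteness of jumps on $\Omega^{(j)}$, one checks that each remainder is $o_P(n^{-1/2})$. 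The probability limit $S_{k,2,2}^{(n)}(X) \stackrel{\mathrm{P}}{\longrightarrow} S_0$ on $\Omega^{(j)}$ is a standard jump-functional convergence: only indices $i$ whose sub-interval of length $k\Delta_n$ contains a jump contribute in the limit, and the remaining indices contribute at most $O_P(n \cdot \Delta_n^2) = O_P(\Delta_n)$.

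The technical core is the joint $\mathcal{F}^{(0)}$-stable convergence of $\sqrt{n}(S_{k,2,2}^{(n)}(X) - S_0, R_{k,2,1}^{(n)}(X), R_{k,1,2}^{(n)}(X))$. For the first coordinate I would invoke the stable CLT for jump functionals of It\^o semimartingales from Jacod--Protter: modulo $o_P(n^{-1/2})$, the centered statistic reduces to a sum indexed by the jump times $s$ of cross terms arising from $(\Delta_i^n X^{(k,l)})^2 - (\Delta X_s^{(l)})^2$, each of which equals $2\Delta X_s^{(l)}$ times the Brownian increment of $X^{(l)}$ on the sub-interval containing $s$, plus smaller-order terms; the three pairings of the two Brownian increments then produce the three summands appearing in $F_{2,2}$, and the factor $k$ reflects that these sub-intervals have length $k\Delta_n$. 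For the noise coordinates, condition on $\mathcal{F}^{(0)}$: both $R_{k,2,1}^{(n)}(X)$ and $R_{k,1,2}^{(n)}(X)$ are sums over $i$ of mean-zero Gaussian random variables that are independent across $i$ (thanks to non-overlapping blocks of $v_j$'s in $\Delta v_i^{(k,l)}$ and Assumption 3), and their conditional variances after scaling by $n$ converge to the stated quantities because $n\epsilon_{m,n}^2 \to \zeta_m$ and only jump-carrying indices contribute in the limit. Conditional mutual independence of $U_2^{(1,2)}$ and $U_3^{(1,2)}$ follows from independence of $v^{(1)}$ and $v^{(2)}$, and joint stability with the first coordinate follows from independence of the noise from $\mathcal{F}^{(0)}$.

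Assembling these ingredients via the continuous mapping theorem for stable convergence yields, in restriction to $\Omega^{(j)}$, the claimed limit $U_{1,k}^{(1,2)} + U_{2}^{(1,2)} + U_{3}^{(1,2)}$. The main obstacle is the stable CLT for $\sqrt{n}(S_{k,2,2}^{(n)}(X) - S_0)$: producing the variance $k F_{2,2}$ exactly requires a careful localization that ensures at most one jump per sub-interval (using the local boundedness in Assumption 1(c)) and a clean identification of the three cross-term contributions from the Brownian fluctuations on either side of each jump.
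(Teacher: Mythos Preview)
Your overall strategy matches the paper's proof: expand $S_{k,2,2}^{(n)}(Y)$ into $S_{k,2,2}^{(n)}(X)+2R_{k,2,1}^{(n)}(X)+2R_{k,1,2}^{(n)}(X)$ plus negligible remainders, invoke the Jacod--Protter stable CLT for jump functionals to obtain $U_{1,k}^{(1,2)}$ with variance $kF_{2,2}$, treat the noise pieces $\mathcal{F}^{(0)}$-conditionally, and combine via $\mathcal{F}^{(0)}$-conditional independence.

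There is one concrete error. You assert that the summands in $R_{k,2,1}^{(n)}(X)$ and $R_{k,1,2}^{(n)}(X)$ are ``independent across $i$ (thanks to non-overlapping blocks of $v_j$'s in $\Delta v_i^{(k,l)}$)''. This is false: $\Delta v_i^{(k,l)}=v_{ki}^{(l)}-v_{k(i-1)}^{(l)}$ shares the endpoint $v_{ki}^{(l)}$ with $\Delta v_{i+1}^{(k,l)}$, so consecutive increments have covariance $-1$ (an MA(1) structure), not $0$. The paper handles this explicitly: it records the covariance structure $\Cov(\Delta v_i^{(k,2)},\Delta v_j^{(k,2)})=2,-1,0$ according as $|i-j|=0,1,\ge 2$, and then shows that the cross terms with $|i-j|=1$ satisfy $E[|\xi_i^n\xi_j^n|]\le K\Delta_n^{3/2}$ (using the semimartingale moment bounds), hence are asymptotically negligible. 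Only after this step does the conditional-variance computation reduce to the ``diagonal'' contribution $2$, which together with the factor $4$ from $(2R)^2$ and $n\epsilon_{m,n}^2\to\zeta_m$ produces the $8\zeta_m$ in the limiting variances. Your argument needs this correction; otherwise the variance computation is unjustified even though the final numbers happen to be right.

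A second point worth tightening: your treatment of the noise pieces (``only jump-carrying indices contribute in the limit'') is the right intuition, but the paper makes it rigorous via the standard big-jump/small-jump localization $X=X(m)+X'(m)$, showing the finite-jump part $X'(m)$ gives the stated stable limit (Proposition 4.4.10 in Jacod--Protter) and the small-jump part $X(m)$ is uniformly negligible as $m\to\infty$. Since Assumption 1(c) allows infinitely many jumps, this localization is not optional.
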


Now we propose a co-jump test for two-dimensional high-frequency data under the presence of small noise. From Theorem 1, we have $T^{(n)} \stackrel{\mathrm{P}}{\to} 1$ under the null hypothesis. Since
\begin{align}
T^{(n)} - 1 &= {S_{2,2,2}^{(n)}(Y) - S_{1,2,2}^{(n)}(Y) \over S_{1,2,2}^{(n)}(Y)}, \nonumber 
\end{align}
we must have the asymptotic distribution of $S_{2,2,2}^{(n)}(Y) - S_{1,2,2}^{(n)}(Y)$. Considering the decomposition of $S^{(n)}_{k,2,2}$ in (\ref{S1}), the asymptotic distribution can be derived from the joint limit distribution of $(S^{(n)}, R^{(n)}_{1}, R^{(n)}_{2})$, where $S^{(n)} = S^{(n)}_{2,2,2}(Y)-S^{(n)}_{1,2,2}(Y)$, $R^{(n)}_{1} = R^{(n)}_{2,2,1}(X) - R^{(n)}_{1,2,1}(X)$ and $R^{(n)}_{2} = R^{(n)}_{2,1,2}(X) - R^{(n)}_{1,1,2}(X)$. 
In restriction to the set $\Omega^{(j)}$, we have
\begin{align}
\sqrt{n}(S^{(n)}, R^{(n)}_{1}, R^{(n)}_{2})&\overset{\mathcal{L}-s}{\longrightarrow} (U_{1,1}^{(1,2)}, U^{(1,2)}_{2}, U^{(1,2)}_{3}). \nonumber 
\end{align}
See the proof of Theorem 2 in Appendix A for details. Hence we obtain the asymptotic property of the test statistics under the null hypothesis. 

\begin{theorem}\label{T_Asy}

Suppose Assumptions 1, 2 and 3 are satisfied with $q=0$ in Assumption 2.  Then under the null hypothesis $\mathbb{H}_{0}$, we have
\begin{align}
\sqrt{n}(T^{(n)} - 1) \overset{\mathcal{L}-s}{\longrightarrow} U = {U_{1,1}^{(1,2)} + U_{2}^{(1,2)} + U_{3}^{(1,2)} \over (U_{0}^{(1,2)})^{2}},  \nonumber
\end{align}
where $U_{0}^{(1,2)} = \sum_{0\leq s\leq1}(\Delta X_{s}^{(1)})^{2}(\Delta X_{s}^{(2)})^{2}$, and  $U^{(1,2)}_{1,1}$, $U_{l}^{(1,2)}$ for $l=2,3$ are $\mathcal{F}^{(0)}$-conditionally mutually independent Gaussian random variables appearing in Theorem 1.
\end{theorem}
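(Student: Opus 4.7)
The plan is to view $\sqrt{n}(T^{(n)}-1)$ as a ratio and argue by a Slutsky-type step combined with the joint stable convergence stated immediately above the theorem. Concretely, I would write
$$\sqrt{n}(T^{(n)}-1) = \frac{\sqrt{n}\bigl(S_{2,2,2}^{(n)}(Y) - S_{1,2,2}^{(n)}(Y)\bigr)}{S_{1,2,2}^{(n)}(Y)},$$
and treat the numerator and the denominator separately.

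For the denominator, the consistency $S_{k,2,2}^{(n)}(X) \stackrel{\mathrm{P}}{\to} S_0 = U_0^{(1,2)}$ recorded just before Theorem 1, combined with the decomposition (\ref{S1}) at $k=1$ and the $O_P(n^{-1/2})$ bound on the two $R_{1,\cdot,\cdot}^{(n)}(X)$ terms, yields $S_{1,2,2}^{(n)}(Y) \stackrel{\mathrm{P}}{\to} U_0^{(1,2)}$ on $\Omega^{(j)}$. On this set the limit is a.s.\ strictly positive, and it is $\mathcal{F}^{(0)}$-measurable, which will allow a clean division inside a stable limit theorem.

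For the numerator, I would apply (\ref{S1}) at $k=1$ and $k=2$ and subtract, obtaining
$$\sqrt{n}\bigl(S_{2,2,2}^{(n)}(Y) - S_{1,2,2}^{(n)}(Y)\bigr) = \sqrt{n}\,S^{(n)} + 2\sqrt{n}\,R_1^{(n)} + 2\sqrt{n}\,R_2^{(n)} + o_P(1),$$
with $S^{(n)}, R_1^{(n)}, R_2^{(n)}$ as introduced in the paragraph preceding the theorem. The joint stable convergence displayed there, namely $\sqrt{n}(S^{(n)}, R_1^{(n)}, R_2^{(n)}) \stackrel{\mathcal{L}-s}{\longrightarrow} (U_{1,1}^{(1,2)}, U_2^{(1,2)}, U_3^{(1,2)})$ with $\mathcal{F}^{(0)}$-conditionally mutually independent Gaussian coordinates, then gives stable convergence of the rescaled numerator to the linear combination appearing in the numerator of $U$.

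Combining the two pieces via a continuous-mapping argument for stable convergence (permissible because the denominator's limit is $\mathcal{F}^{(0)}$-measurable and a.s.\ positive on $\Omega^{(j)}$; cf.\ \cite{JP(2012)}) produces the claimed stable limit. The bulk of the work is actually in the joint stable convergence that precedes the theorem rather than in the theorem itself; the hardest piece is the asymptotics of $\sqrt{n}\bigl(S_{2,2,2}^{(n)}(X) - S_{1,2,2}^{(n)}(X)\bigr)$ on $\Omega^{(j)}$, which needs a Jacod-style localization around the finitely many common jump times of $X^{(1)}$ and $X^{(2)}$ and a careful comparison of Brownian increments on the two nested grids. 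Mutual $\mathcal{F}^{(0)}$-conditional independence of the three limit coordinates then follows from the product structure of $(\Omega,\mathcal{F},P)$ and the i.i.d.\ standard-normal assumption on $(v_i)$ in Assumption 3.
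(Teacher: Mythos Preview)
Your proposal is correct and follows essentially the same route as the paper: the paper likewise writes $T^{(n)}-1$ as the ratio $\bigl(S_{2,2,2}^{(n)}(Y)-S_{1,2,2}^{(n)}(Y)\bigr)/S_{1,2,2}^{(n)}(Y)$, packages the stable convergence of the numerator as a separate proposition in the appendix (proved via exactly the three-term decomposition you obtain by subtracting the two instances of~(\ref{S1}) and then invoking Proposition~15.3.2 of \cite{JP(2012)} for the $X$-part together with the localization argument from the proof of Theorem~1 for the noise parts), and then divides by the consistent denominator. Your identification of where the real work lies is accurate.
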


We notice that in contrast to Theorem 4.1 in \cite{JT(2009)}, additional asymptotic variance terms appear in the limit distribution of the test statistics that must be estimated for the construction of a co-jump test under the small noise assumption.  If $\zeta_{1} = \zeta_{2}=0$ (no noise case), the result in Theorem 2 corresponds to their result. Hence, Theorem 2 includes their result as a special case.  We propose methods to estimate the asymptotic variance of the test statistic in Section 5. 

Next we consider the asymptotic property of the test statistic under the alternative hypothesis $\mathbb{H}_{1}$. In restriction to the set $\Omega^{(d)}$, because of the presence of small noise, we have
\begin{align}
S_{k,2,2}^{(n)}(Y) &= S_{k,2,2}^{(n)}(X) + 2R_{k,2,1}^{(n)}(X) + 2R_{k,1,2}^{(n)}(X) +o_{P}(1/n). \nonumber 
\end{align}
It is possible to obtain a joint limit theorem of $n\times (S_{2,2,2}^{(n)}(X), S_{1,2,2}^{(n)}(X), \widetilde{R}^{(n)}_{1}, \widetilde{R}^{(n)}_{2})$, where $\widetilde{R}^{(n)}_{1} = R_{2,2,1}^{(n)}(X) + R_{2,1,2}^{(n)}(X)$ and $\widetilde{R}^{(n)}_{2} = R_{1,2,1}^{(n)}(X) + R_{1,1,2}^{(n)}(X)$ in restriction to the set $\Omega^{(d)}$. This yields  
\[
T^{(n)} \stackrel{\mathcal{L}-s}{\longrightarrow} \Phi,
\]
where $\Phi$ is a random variable which is almost surely different from 1. 
The detailed description of $\Phi$ and proof is given in Appendix A. Then we obtain the following proposition.
\begin{proposition}\label{T_SP}
Suppose Assumptions 1, 2 and 3 are satisfied with $q=0$ in Assumption 2.  Then the test statistic $T^{(n)}$ has asymptotic size $\alpha$ under the null hypothesis $\mathbb{H}_{0}$ and is consistent under the alternative hypothesis $\mathbb{H}_{1}$, that is, 
\begin{align}
\mathrm{P}\left(\left.\left|{\sqrt{n}(T^{(n)}-1) \over \sqrt{V^{(j)}_{1,2}}}\right| \geq q_{1- \alpha/2}\right| \mathbb{H}_{0}\right) &\longrightarrow \alpha, \quad \text{if $\mathrm{P}(\Omega^{(j)}) >0$}, \nonumber  \\
\mathrm{P}\left(\left.\left|{\sqrt{n}(T^{(n)}-1) \over \sqrt{V^{(j)}_{1,2}}}\right| \geq q_{1-\alpha/2}\right| \mathbb{H}_{1}\right) &\longrightarrow 1,\quad \text{if $\mathrm{P}(\Omega^{(d)})>0$}, \nonumber 
\end{align}
where $V^{(j)}_{1,2}$ is the asymptotic conditional variance of the random variable given in Theorem 2, $\mathrm{P}(\cdot |\mathbb{H}_{0})$ and $\mathrm{P}(\cdot |\mathbb{H}_{1})$ are conditional probabilities with respect to the sets $\Omega^{(j)}$ and $\Omega^{(d)}$, and $q_{\alpha}$ be the $\alpha$-quantile of standard normal distribution. 
\end{proposition}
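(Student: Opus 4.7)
The plan is to prove the two assertions separately, using Theorem 2 for the size statement and the convergence $T^{(n)} \stackrel{\mathcal{L}-s}{\longrightarrow} \Phi$ (stated in the paragraph preceding the proposition) for consistency. A key observation in both parts is that the normalizing quantity $V^{(j)}_{1,2}$ is $\mathcal{F}^{(0)}$-measurable, so that stable convergence interacts cleanly with division by it, and that the partitioning sets $\Omega^{(j)}$ and $\Omega^{(d)}$ are themselves $\mathcal{F}^{(0)}$-measurable.

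For the size result, I would start from Theorem 2: in restriction to $\Omega^{(j)}$, $\sqrt{n}(T^{(n)}-1)\stackrel{\mathcal{L}-s}{\longrightarrow} U$, where $U$ is $\mathcal{F}^{(0)}$-conditionally centered Gaussian with conditional variance $V^{(j)}_{1,2}$. Since $S_0 = U_0^{(1,2)} > 0$ on $\Omega^{(j)}$ (at least one co-jump is present) and $V^{(j)}_{1,2}$ is $\mathcal{F}^{(0)}$-measurable and a.s.\ positive there, the standard property of stable convergence that one may multiply by $\mathcal{F}^{(0)}$-measurable factors yields $\sqrt{n}(T^{(n)}-1)/\sqrt{V^{(j)}_{1,2}}\stackrel{\mathcal{L}-s}{\longrightarrow} U/\sqrt{V^{(j)}_{1,2}}$, which conditionally on $\mathcal{F}^{(0)}$, and hence unconditionally, is standard normal and independent of $\mathcal{F}^{(0)}$. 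Restricting to the $\mathcal{F}^{(0)}$-measurable event $\Omega^{(j)}$ and applying the definition of stable convergence gives the first display.

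For consistency, I would use that on $\Omega^{(d)}$, $T^{(n)}$ converges stably to a random variable $\Phi$ that is almost surely different from $1$ (the explicit form being obtainable from the joint limit of $n\times(S_{2,2,2}^{(n)}(X),S_{1,2,2}^{(n)}(X),\widetilde{R}_1^{(n)},\widetilde{R}_2^{(n)})$, since on $\Omega^{(d)}$ both numerator and denominator of $T^{(n)}$ are of order $1/n$ with distinct scaling behavior for $k=1$ versus $k=2$). Consequently $|T^{(n)}-1|$ is bounded below by a strictly positive random variable in $\mathrm{P}(\cdot\,|\,\Omega^{(d)})$-probability, so $\sqrt{n}|T^{(n)}-1|\to\infty$ in probability. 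In the feasible test the denominator $\sqrt{V^{(j)}_{1,2}}$ is replaced by the consistent estimator built in Section~5; provided this estimator remains $O_P(1)$ on $\Omega^{(d)}$, the full ratio diverges and the rejection probability tends to $1$.

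The main obstacle I expect is the first step: formally dividing inside a stable-convergence statement by $\sqrt{V^{(j)}_{1,2}}$ and then transferring the conclusion to the feasible test. This is essentially a Slutsky-type argument for stable convergence and is routine provided $V^{(j)}_{1,2}$ is bounded away from zero on $\Omega^{(j)}$ (which it is, since $S_0>0$ there and the numerator terms of the variance are nonnegative) and the estimator from Section~5 is consistent on $\Omega^{(j)}$. A secondary, more delicate, point is controlling the Section~5 estimator on $\Omega^{(d)}$, where $S_0=0$ makes the population variance ill-defined; one must check that the estimator does not blow up faster than $\sqrt{n}$, so that consistency under the alternative is preserved.
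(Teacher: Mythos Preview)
Your approach is essentially the same as the paper's, but the emphasis is inverted. The paper spends one sentence on size (``follows from Theorem~\ref{T_Asy}'') and devotes nearly the entire proof to the consistency part: it explicitly constructs the limit $\Phi$ by introducing auxiliary sequences $(\kappa_p),(L_p),(U_p),(U'_p),(\widetilde U_p),(\widetilde U'_p),(\overline Z_p,\overline Z''_p),(\widetilde Z_p,\widetilde Z''_p)$, forming $R_p,R'_p,R''_p$ and then $\widetilde D,\widetilde D'',L,L'',H$, and proving the joint stable convergence $n\times(S^{(n)}_{2,2,2}(X),S^{(n)}_{1,2,2}(X),R^{(n)}_{2,2,1}+R^{(n)}_{2,1,2},R^{(n)}_{1,2,1}+R^{(n)}_{1,1,2})\to(\widetilde D''+2H,\widetilde D+H,L'',L)$, whence $\Phi=(\widetilde D''+2H+2L'')/(\widetilde D+H+2L)$. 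The conclusion $\Phi\neq 1$ a.s.\ is then obtained by observing that $(\widetilde D'',\widetilde D,L'',L)$ admits a density conditionally on $\mathcal F$ and on $\Omega^{(d)}$. You treat this entire construction as given (from the paragraph preceding the proposition), which is legitimate since the text promises ``the detailed description of $\Phi$ and proof is given in Appendix~A''---but be aware that this \emph{is} Appendix~A's proof, so your plan is incomplete until you actually write out that limit identification.

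Two smaller remarks. First, your discussion of the feasible estimator on $\Omega^{(d)}$ is not needed here: the proposition is stated with the true $V^{(j)}_{1,2}$, not $\widehat V^{(j)}_{1,2}$ (the feasible version is Corollary~2). Second, neither you nor the paper confronts the fact that on $\Omega^{(d)}$ the population quantity $V^{(j)}_{1,2}$ is a $0/0$ form (both $S_0$ and the jump functionals in the numerator vanish when there are no co-jumps); the consistency statement is tacitly understood for any finite normalizer, since $\sqrt n\,|T^{(n)}-1|\to\infty$ regardless.
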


Proposition \ref{T_SP} implies that if we have a consistent estimator of $V^{(j)}_{1,2}$,  then we can carry out the co-jump test. The procedures to estimate the asymptotic variance of test satistics are discussed in Section 5. 

\section{Consistent estimation of the asymptotic conditional variances}

In this section, we first construct an estimator of the noise parameter $\zeta_{m}$. Then we propose estimation procedures of the asymptotic conditional variance of RV and the co-jump test statistics. 

\subsection{Estimation of the noise variances} 

The most important characteristic of the SIML is its simplicity compared with the pre-averaging method in \cite{JLMPV(2009)} and the spectral method in \cite{BW(2015)} for estimating the quadratic variation consistently, and have asymptotic robustness for the rounding-error \citep{KS(2010), KS(2011)}. It is also quite easy to deal with the multivariate high-frequency data in this approach as demonstrated in \cite{KS(2011)}.

Let $W_{n} = (Y_{\Delta_{n}}^{\top}, Y_{2\Delta_{n}}^{\top},\hdots,Y_{1}^{\top})^{\top}$ be a $n \times d$ matrix  where $Y_{j\Delta_{n}}$ is the $j$th observation of the process $Y$ and $\mathbf{C}_{n}$ be $n \times n$ matrix,
\begin{align}
\mathbf{C}_{n} &= 
\left(
\begin{array}{ccccc}
1 & 0 & \cdots & 0 & 0 \\
1 & 1 & 0 & \cdots & 0 \\
1 & 1 & 1 & \cdots & 0 \\
1 & \cdots & 1 & 1 & 0 \\
1 & \cdots & 1 & 1 & 1 \\
\end{array}
\right),\quad 
\mathbf{C}_{n}^{-1} = 
\left(
\begin{array}{ccccc}
1 & 0 & \cdots & 0 & 0 \\
-1 & 1 & 0 & \cdots & 0 \\
0 & -1 & 1 & \cdots & 0 \\
0 & \cdots & -1 & 1 & 0 \\
0 & \cdots & 0 & -1 & 1 \\
\end{array}
\right) . \nonumber 
\end{align}
We consider the spectral decomposition of $\mathbf{C}_{n}^{-1}(\mathbf{C}_{n}^{-1})^{\top}$, that is,
\[
\mathbf{C}_{n}^{-1}(\mathbf{C}_{n}^{-1})^{\top} = \mathbf{P}_{n}\mathbf{D}_{n}\mathbf{P}_{n}^{\top} = 2\mathbf{I}_{n} - 2\mathbf{A}_{n},
\]
where $\mathbf{D}_{n}$ is a diagonal matrix with the $k$th element
\[
d_{k} = 2\left[1- \cos\left(\pi \left({2k-1 \over 2n +1}\right)\right)\right], \quad k=1,\hdots,n,
\]
and 
\[
\mathbf{P}_{n} = (p_{jk}),\quad p_{jk} = \sqrt{{2 \over n+(1/2)}}\cos{\left[{2\pi \over 2n+1}\left(k-{1\over 2}\right)\left(j - {1\over 2}\right)\right]}, \quad 1 \leq j,k \leq n,
\]
\[
\mathbf{A}_{n} = {1\over 2}
\left(
\begin{array}{ccccc}
1 & 1 & \cdots & 0 & 0 \\
1 & 0 & 1 & \cdots & 0 \\
0 & 1 & 0 & \cdots & 0 \\
0 & \cdots & 1 & 0 & 1 \\
0 & \cdots & 0 & 1 & 0 \\
\end{array}
\right). 
\]
We transform $W_{n}$ to $Z_{n}$ by
\[
Z_{n} = \Delta_{n}^{-1/2}\mathbf{P}_{n}\mathbf{C}_{n}^{-1}(W_{n} - \overline{W}_{0}),
\]
where $\overline{W}_{0} = (Y_{0}^{\top},\hdots,Y_{0}^{\top})^{\top}$. The SIML estimator of the quadratic variation $[X,X]$ is defined by
\begin{align}
[\widehat{X,X}]_{\mathrm{SIML}} = {1 \over m_{n}}\sum_{k=1}^{m_{n}}Z_{n}^{\top}Z_{n},\quad m_{n} = n^{p},\quad 0< p <{1 \over 2}. \label{SIMLp}
\end{align}
From the straightforward extension of Theorem 1 of \cite{KS(2013)}, we obtain
\begin{align}
[\widehat{X,X}]_{\mathrm{SIML}} \stackrel{\mathrm{P}}{\longrightarrow} [X,X] = \int_{0}^{1}c_{s}ds + \sum_{0\leq s \leq 1}(\Delta X_{s})(\Delta X_{s})^{\top}. \label{SIML2}
\end{align}
From (\ref{RV1}), (\ref{RV2}) and (\ref{SIML2}), we obtain the next proposition.

\begin{proposition}\label{Prp2}  
Suppose Assumptions 1, 2 and 3 with $q=0$ in Assumption 2 are satisfied. Then   

\begin{align}
{1 \over 2}\left(\mathrm{RV}(Y^{(m)}) - [\widehat{X,X}]_{\mathrm{SIML}}^{(m,m)}\right) \stackrel{\mathrm{P}}{\longrightarrow} \zeta_{m}, \quad m=1,\hdots,d, \nonumber 
\end{align}
where $[\widehat{X,X}]_{\mathrm{SIML}}^{(m,m)}$ is the $(m,m)$ component of the SIML estimator.
\end{proposition}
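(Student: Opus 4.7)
The plan is to recognize that Proposition 5 is essentially an immediate consequence of combining the probability limits already stated in the paper, namely (\ref{RV1})/(\ref{RV2}) (the probability limit of RV in the one-dimensional small-noise model) and (\ref{SIML2}) (the SIML convergence to the full quadratic variation matrix). Concretely, I would subtract the two asymptotic limits and observe that the ``signal'' part (integrated volatility plus sum of squared jumps) cancels exactly, leaving $2\zeta_m$.

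First I would fix $m$ and view the $m$-th coordinate process in isolation. Since the model specification (\ref{E}) is componentwise, the $m$-th coordinate satisfies $Y^{(m)}_{t_i^n}=X^{(m)}_{t_i^n}+\epsilon_{n,m}v_i^{(m)}$, where $(X^{(m)}_t)_{0\leq t\leq 1}$ is a one-dimensional It\^o semimartingale of the form (\ref{E1}) (with $\sigma$ replaced by its $m$-th row and the jump coefficient $\delta$ replaced by $\delta^m$), and $\{v_i^{(m)}\}$ is an i.i.d.\ standard normal sequence by Assumption~3. Assumption~2 with $q=0$ applied to the $m$-th coordinate reads $n\epsilon_{n,m}^{2}=\zeta_{m}+O(n^{-1})$, so the univariate small-noise results (\ref{RV1}) and (\ref{RV2}) apply with parameter $\zeta_m$. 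Thus, regardless of whether $X^{(m)}$ is continuous or has jumps,
\begin{align*}
\mathrm{RV}(Y^{(m)}) \stackrel{\mathrm{P}}{\longrightarrow} \int_{0}^{1}c_{s}^{(m,m)}\,ds+\sum_{0\leq s\leq1}(\Delta X_{s}^{(m)})^{2}+2\zeta_{m}.
\end{align*}

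Next I would extract the $(m,m)$ component of (\ref{SIML2}). Since (\ref{SIML2}) is a convergence of a symmetric matrix in probability, the continuous mapping theorem gives convergence of each entry; in particular
\begin{align*}
[\widehat{X,X}]_{\mathrm{SIML}}^{(m,m)} \stackrel{\mathrm{P}}{\longrightarrow} \int_{0}^{1}c_{s}^{(m,m)}\,ds+\sum_{0\leq s\leq1}(\Delta X_{s}^{(m)})^{2}.
\end{align*}
Subtracting the two displays and applying the continuous mapping theorem to the map $(a,b)\mapsto (a-b)/2$ yields the stated convergence.

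There is no genuine obstacle in the argument: everything reduces to quoting (\ref{RV1})--(\ref{RV2}) and (\ref{SIML2}) componentwise and applying Slutsky. The only mildly non-trivial point to verify is that the univariate small-noise framework is indeed inherited by each coordinate, which I would handle in a single sentence by noting that Assumptions~1, 2 (with $q=0$) and~3 for the $d$-dimensional model immediately imply the corresponding univariate assumptions for $(X^{(m)},v^{(m)},\epsilon_{n,m})$ with noise constant $\zeta_m$.
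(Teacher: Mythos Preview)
Your proposal is correct and follows exactly the route indicated in the paper: the statement is derived by combining the probability limits (\ref{RV1})/(\ref{RV2}) for $\mathrm{RV}(Y^{(m)})$ with the SIML consistency (\ref{SIML2}) componentwise and subtracting. The paper's own proof is even terser---it simply refers to \cite{KS(2013)} for the SIML consistency and takes the rest as immediate from the preceding discussion---so your write-up is, if anything, more detailed than what appears there.
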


The SIML estimator can be regarded as a modification of the standard maximum likelihood (ML) method under the Gaussian process and an extension for the multivariate case of the ML estimation of the univariate diffusion process with market microstrucure noise by \cite{GJ(2001b)} (see \cite{KS(2013)}). In stead of the SIML estimator, we can also use the quasi maximum likelihood estimator studied in \cite{X(2010)} for the noise robust estimation of quadratic variation.

\subsection{Estimation of the asymptotic conditional variances of RV and the test statistic} 

We propose procedures to estimate the the asymptotic conditional variances of co-jump test statistics studied in Section 4, and RV when the process $(X_{t})_{0 \leq t \leq 1}$ has continuous path or discontinuous path.
We introduce some notations. For $l,m,p,q=1,\hdots,d$, 
\begin{align}
D^{(l,m)}_{p,q}(r,s) &= \sum_{0 \leq u \leq 1}c^{(p,q)}_{u}(\Delta X_{u}^{(l)})^{r}(\Delta X_{u}^{(m)})^{s},\quad r,s \geq 1,\nonumber \\
J^{(l,m)}(r,s) &=  \sum_{0 \leq u \leq 1}(\Delta X_{u}^{(l)})^{r}(\Delta X_{u}^{(m)})^{s}, \quad r,s \geq 2,\nonumber \\
A^{(l,m)}(r) &= \int_{0}^{1}(c_{s}^{(l,m)})^{r}ds, \quad r \geq 1. \nonumber 
\end{align}

To estimate the asymptotic conditional variance of the co-jump test statistic, we consider following statistics. For $l,m,p,q = 1,\hdots,d$ and $r,s \geq 2$, 
\begin{align}
\widehat{D}^{(l,m)}_{p,q}(r,s) &= \sum_{i=1}^{n}(\Delta_{i}^{n}Y^{(l)})^{r}(\Delta_{i}^{n}Y^{(m)})^{s} \nonumber \\
 &\qquad \times \left[{1 \over 2k_{n}\Delta_{n}}\sum_{j \in I_{n}(i)}\left[(\Delta_{i}^{n}Y^{(p)})(\Delta_{i}^{n}Y^{(q)})1_{\{||\Delta_{j}^{n}Y|| \leq \alpha \Delta_{n}^{\theta}\}} - 2\delta_{pq}\sqrt{\widehat{\zeta}_{p}\widehat{\zeta}_{q}}\right]\right], \nonumber \\
\widehat{J}^{(l,m)}(r,s) &=  \sum_{i=1}^{n}(\Delta_{i}^{n}Y^{(l)})^{r}(\Delta_{i}^{n}Y^{(m)})^{s}, \nonumber \\
\widehat{F}^{(l,m)}(r,s) &= r^{2}\widehat{D}^{(l,m)}_{l,l}(2(r-1),2s) + 2rs\widehat{D}^{(l,m)}_{l,m}(2r-1,2s-1) + s^{2}\widehat{D}^{(l,m)}_{m,m}(2r,2(s-1)), \nonumber 
\end{align}
where $I_{n}(i) = \{j \in \mathbb{N} : j \neq i, 1 \leq j \leq n, |i - j| \leq k_{n}\}$, $k_{n} \to \infty$, $k_{n}\Delta_{n} \to 0$ as $n \to \infty$, $\alpha>0$ and $\theta \in (0,1/2)$. 

Although we can construct the consistent estimators of the asymptotic variances based on the truncated functionals (see also \cite{M(2009)} and \cite{JT(2009)}) for the no noise case, the truncation method gives us biased estimators since we now assume the presence of the small noise. 
In fact, since the size of noise gets smaller as the observation frequency increases, truncation is not sufficient to distinguish the continuous part of process $X$ with the noise. To be precise, from the similar argument of Theorem 9.3.2 in \cite{JP(2012)}, we have
\begin{align}
&{1 \over 2k_{n}\Delta_{n}}\sum_{j \in I_{n}(i)}(\Delta_{j}^{n}Y^{(p)})(\Delta_{j}^{n}Y^{(q)})1_{\{||\Delta_{j}^{n}Y|| \leq \alpha \Delta_{n}^{\theta}\}}\nonumber \\
&\quad - {1 \over 2k_{n}\Delta_{n}}\sum_{j \in I_{n}(i)}(\Delta_{j}^{n}C^{(p)} + \epsilon_{p,n}\Delta_{j}v^{(p)})(\Delta_{j}^{n}C^{(q)} + \epsilon_{q,n}\Delta_{j}v^{(q)}) \stackrel{\mathrm{P}}{\longrightarrow} 0, \nonumber 
\end{align}
where $C_{t} = \int_{0}^{t}\sigma_{s}dW_{s}$ and $\Delta_{j}^{n}C^{(p)}$ is the $p$th component of  $C_{j\Delta_{n}} - C_{(j-1)\Delta_{n}}$.

Since $E[\Delta_{i}^{n}C^{(p)}\epsilon_{j,n}\Delta_{i}v^{(q)}] = 0$ and 
$\epsilon_{p,n}\epsilon_{q,n}E[\Delta_{i}v^{(p)}\Delta_{i}v^{(q)}] = 
2\delta_{pq}\sqrt{\widehat{\zeta_{p}}\widehat{\zeta_{q}}}\Delta_{n} + o(\Delta_{n})$ for all $1 \leq p,q \leq d$, $\# I_{n}(i) = 2k_{n}$ where $\delta_{pq}$ is a Dirac's delta function, it follows that
\begin{align}
{1 \over 2k_{n}\Delta_{n}}\sum_{j \in I_{n}(i)}\Delta_{j}^{n}C^{(p)}\Delta_{j}^{n}C^{(q)}- c_{i\Delta_{n}}^{(p,q)} \stackrel{\mathrm{P}}{\longrightarrow}  0. \nonumber 
\end{align}
Therefore, we obtain
\begin{align}
{1 \over 2k_{n}\Delta_{n}}\sum_{j \in I_{n}(i)}(\Delta_{j}^{n}Y^{(p)})(\Delta_{j}^{n}Y^{(q)})1_{\{||\Delta_{j}^{n}Y|| \leq \alpha \Delta_{n}^{\theta}\}} - [c_{i\Delta_{n}}^{(p,q)}+2\delta_{pq}\sqrt{\widehat{\zeta_{p}}\widehat{\zeta_{q}}}] \stackrel{\mathrm{P}}{\longrightarrow}  0. \nonumber 
\end{align}
Then, for $\widehat{D}^{(l,m)}_{p,q}(r,s)$, 
\[
{1 \over 2k_{n}\Delta_{n}}\sum_{j \in I_{n}(i)}\left[(\Delta_{i}^{n}Y^{(p)})(\Delta_{i}^{n}Y^{(q)})1_{\{||\Delta_{j}^{n}Y|| \leq \alpha \Delta_{n}^{\theta}\}} - 2\delta_{pq}\sqrt{\widehat{\zeta_{p}}\widehat{\zeta_{q}}}\right]  
\]
is an estimator of the spot volatility $c_{i\Delta_{n}}^{(p,q)}$. 
By the simple extension of Theorem 2 in \cite{KK(2015)}, it can be obtained that  $(\Delta_{i}^{n}Y^{(l)})^{r}(\Delta_{i}^{n}Y^{(m)})^{s}$ is an unbiased estimator of $(\Delta X_{i\Delta_{n}}^{(l)})^{r}(\Delta X_{i\Delta_{n}}^{(m)})^{s}$. 
Hence, for estimating the asymptotic conditional variance of RV, we consider following statistics:
\begin{align}
\widehat{c}_{i}^{(l,m)} &= {1\over 2k_{n}\Delta_{n}}\sum_{i = k_{n}+1}^{n-k_{n}}(\Delta_{i}^{n}Y^{(l)})(\Delta_{i}^{n}Y^{(m)})1_{\{||\Delta_{i}^{n}Y||\leq \alpha \Delta_{n}^{\theta}\}} -2 \delta_{lm}\sqrt{\widehat{\zeta}_{l}\widehat{\zeta}_{m}}, \nonumber \\
\widehat{A}^{(l,m)}(r) &= \Delta_{n} \sum_{i=1}^{n-k_{n}+1}(\widehat{c}^{(l,m)}_{i})^{r}, \nonumber \\
\widehat{D}^{(l,m)}_{p,q}(1,1) &= \sum_{i=1}^{n}\widehat{c}_{i}^{(p,q)}(\Delta_{i}^{n}Y^{(l)})(\Delta_{i}^{n}Y^{(m)})1_{\{||\Delta_{i}^{n}Y|| > \alpha \Delta_{n}^{\theta}\}}. \nonumber 
\end{align}
Then we obtain the next result.
\begin{proposition}\label{Prp3}  
Suppose Assumptions 1, 2 and 3 are satisfied  with $q=0$ in Assumption 2. Then, for $l,m,p,q = 1,\hdots, d$ and $r,s \geq 1$, we have
\begin{align}
\widehat{D}^{(l,m)}_{p,q}(r,s)  &\stackrel{\mathrm{P}}{\longrightarrow} D^{(l,m)}_{p,q}(r,s),\quad \widehat{J}^{(l,m)}(r,s)  \stackrel{\mathrm{P}}{\longrightarrow} J^{(l,m)}(r,s),   \nonumber \\
\widehat{F}^{(l,m)}(r,s)  &\stackrel{\mathrm{P}}{\longrightarrow} F^{(l,m)}(r,s),\quad \widehat{A}^{(l,m)}(r)\stackrel{\mathrm{P}}{\longrightarrow} A^{(l,m)}(r). \nonumber
\end{align}
\end{proposition}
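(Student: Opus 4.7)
The plan is to leverage the two pieces of analysis that already appear just before the statement of Proposition \ref{Prp3}: (i) the decomposition of the truncated local quadratic variation showing that $\widehat{c}_i^{(p,q)} \stackrel{\mathrm{P}}{\to} c_{i\Delta_n}^{(p,q)}$, and (ii) the remark (from Theorem 2 of \cite{KK(2015)}) that $(\Delta_i^n Y^{(l)})^r(\Delta_i^n Y^{(m)})^s$ is an unbiased estimator of $(\Delta X_{i\Delta_n}^{(l)})^r(\Delta X_{i\Delta_n}^{(m)})^s$. With those two building blocks, the four convergences reduce to localized Riemann-sum arguments. First I would fix the noise scaling: Assumption 2 with $q=0$ gives $\epsilon_{m,n}^2 = \zeta_m/n + o(1/n)$, so a typical noise increment is of order $\Delta_n^{1/2}$, the same order as a continuous Brownian increment, and with $\theta \in (0,1/2)$ the truncation level $\alpha\Delta_n^\theta$ dominates both while being dwarfed by genuine jumps. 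The noise correction $2\delta_{pq}\sqrt{\widehat\zeta_p\widehat\zeta_q}$ exactly cancels the bias contributed by $\epsilon_{p,n}\epsilon_{q,n}E[\Delta_i v^{(p)}\Delta_i v^{(q)}]$, which is of the same order as the target $c^{(p,q)}_u\Delta_n$; consistency of $\widehat\zeta_m$ is already provided by Proposition \ref{Prp2}, so plugging it in does not disturb the asymptotics.

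For $\widehat{J}^{(l,m)}(r,s)$ with $r,s\geq 2$, I would split the sum over $i$ into indices near a jump time and the complement. On the complement the increments are $O_P(\Delta_n^{1/2})$, so $\sum_i (\Delta_i^n Y^{(l)})^r(\Delta_i^n Y^{(m)})^s$ restricted there is $O_P(n\,\Delta_n^{(r+s)/2})=o_P(1)$ since $r+s\geq 4$. Near a jump time $u$, the single increment converges to $(\Delta X_u^{(l)})^r(\Delta X_u^{(m)})^s$ by the unbiasedness remark above, so the total sum converges to $J^{(l,m)}(r,s)$. The same partition works for $\widehat{D}^{(l,m)}_{p,q}(r,s)$ with $r,s\geq 2$: away from jumps the factor $\widehat{c}_i^{(p,q)}$ is bounded in probability while the product of increments vanishes in the sum, and at a jump time $u$ the local spot-volatility estimator $\widehat{c}_i^{(p,q)} \to c_u^{(p,q)}$ by continuity of $c$, producing the sum $\sum_u c^{(p,q)}_u(\Delta X_u^{(l)})^r(\Delta X_u^{(m)})^s$. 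The statement for $\widehat{F}^{(l,m)}(r,s)$ is then immediate from linearity of the limit and the defining identity.

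The two remaining pieces require slightly different handling. For $\widehat{D}^{(l,m)}_{p,q}(1,1)$ the truncation is flipped to $1_{\{\|\Delta_i^n Y\|>\alpha\Delta_n^\theta\}}$, so only indices near a jump survive asymptotically; combining this with $\widehat c_i^{(p,q)}\to c_u^{(p,q)}$ and $(\Delta_i^n Y^{(l)})(\Delta_i^n Y^{(m)})\to (\Delta X_u^{(l)})(\Delta X_u^{(m)})$ at those indices gives $D^{(l,m)}_{p,q}(1,1)$. For $\widehat A^{(l,m)}(r)=\Delta_n\sum_i(\widehat c_i^{(l,m)})^r$ I would invoke the locally bounded continuity of $c$ together with the uniform-in-$i$ spot consistency to pass to $\Delta_n\sum_i (c^{(l,m)}_{i\Delta_n})^r$, and then apply the standard Riemann-sum convergence to $\int_0^1(c_s^{(l,m)})^r ds=A^{(l,m)}(r)$; a truncation argument on the set where $c$ is large (via a localization as in Assumption 1) is used to upgrade $L^1$ bounds to $L^r$.

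The main obstacle, and the part that genuinely requires care, is showing uniformly over $i$ that the truncated, noise-corrected local averages $\widehat c_i^{(l,m)}$ converge at a rate strong enough to survive being raised to the $r$-th power and then summed—in particular controlling the fluctuations produced by the noise-correction term $2\delta_{lm}\sqrt{\widehat\zeta_l\widehat\zeta_m}$ when $\widehat\zeta_m$ itself fluctuates like $O_P(n^{-1/2})$. This is handled by the choice $k_n\to\infty$, $k_n\Delta_n\to 0$, which makes the averaging window large enough to kill noise fluctuations but short enough that $c$ is approximately constant; the uniformity follows from a chaining/Doob-type bound on the local martingale part of $C$ together with boundedness of $\widehat\zeta_m - \zeta_m$. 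Once this is in place, all four convergences follow by the approach outlined above.
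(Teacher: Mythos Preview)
Your proposal is correct and follows essentially the same route as the paper: reduce to spot-volatility consistency plus the behavior of increment products at and away from jump times, with the noise handled by the bias correction and the order estimate $\epsilon_{m,n}\Delta v_i^{(m)}=O_P(\Delta_n^{1/2})$. The paper's own proof is slightly more explicit about two points you treat informally---it shows directly that $\sup_i 1_{\{|\epsilon_{j,n}\Delta v_i^{(j)}|>\alpha\Delta_n^\theta\}}\stackrel{\mathrm{P}}{\to}0$ so that the truncation indicator on $Y$ can be swapped for one on $X$, and it expands $(\Delta_i^n Y^{(1)})(\Delta_i^n Y^{(2)})1_{\{\cdot\}}$ and kills each noise cross-term by a second-moment bound---before invoking Theorems 9.4.1 and 9.5.1 of \cite{JP(2012)}; conversely, your discussion of the uniform-in-$i$ control needed for $\widehat A^{(l,m)}(r)$ is more careful than the paper's bare citation.
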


From Propositions \ref{Prp2} and \ref{Prp3}, we can construct the consistent estimator of the  asymptotic variance of co-jump test statistics

\begin{align}
\widehat{V}^{(j)}_{1,2} &= {\widehat{F}^{(1,2)}(2,2) + 8(\widehat{\zeta_{2}}\widehat{J}^{(1,2)}(2,4) + \widehat{\zeta_{1}}\widehat{J}^{(1,2)}(4,2)) \over n\times \widehat{J}^{(1,2)}(2,2)}, \nonumber
\end{align}
and for consistent estimators of the asymptotic variances of RV, 
\begin{align}
\widehat{V}^{(c)} &= (2\widehat{A}^{(1,1)}(4) + 8\widehat{\zeta_{1}}\widehat{A}^{(1,1)}(2) + 12\widehat{\zeta_{1}}^{2})/n, \quad \text{if $X$ is  continuous}, \nonumber \\
\widehat{V}^{(j)} &=  (2\widehat{A}^{(1,1)}(4) + 4\widehat{D}^{(1,1)}_{1,1}(1,1) + 8\widehat{\zeta_{1}}[\widehat{X,X}]^{\text{SIML}}_{(1,1)})/n, \quad \text{if $X$ is  an It\^o semimartingale of the form (\ref{E1})}. \nonumber
\end{align}
Therefore, we obtain the next two central limit theorems.

\begin{corollary}
Suppose Assumptions 1, 2 and 3 are satisfied with $q=0$ in Assumption 2. Then we have the following convergence in law: 
\begin{itemize} 
\item[(i)] If $X$ is continuous, then 
\begin{align}
(\widehat{V}^{(c)})^{-1/2}(\mathrm{RV}(Y) - U_{0}^{(c)}) &\overset{\mathcal{L}}{\longrightarrow} \text{N}(0,1),  \nonumber 
\end{align}
where $U_{0}^{(c)} = \int_{0}^{1}\sigma_{s}^{2}ds + 2\zeta_{1}$.
\item[(ii)] If $X$ is the It\^o semimartingale of the form ($\ref{E1}$), then
\begin{align}
(\widehat{V}^{(j)})^{-1/2}(\mathrm{RV}(Y) - U_{0}^{(j)}) &\overset{\mathcal{L}}{\longrightarrow} \text{N}(0,1), \nonumber 
\end{align}
where $U_{0}^{(j)} = \int_{0}^{1}\sigma_{s}^{2}ds + \sum_{0 \leq s \leq 1}(\Delta X_{s})^{2}+ 2\zeta_{1}$.
\end{itemize}
\end{corollary}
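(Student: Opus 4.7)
The plan is to deduce both central limit theorems by combining the stable convergence results for $\mathrm{RV}(Y)$ recalled in equations (\ref{RV1}) and (\ref{RV2}) with the consistency of the variance estimators constructed in Propositions \ref{Prp2}, \ref{Prp3} and the SIML consistency (\ref{SIML2}). First I would write down the $\mathcal{F}^{(0)}$-conditional asymptotic variances given by \cite{KK(2015)}: in the continuous case,
\begin{align*}
V^{(c)} &= 2\int_{0}^{1}\sigma_{s}^{4}ds + 8\zeta_{1}\int_{0}^{1}\sigma_{s}^{2}ds + 12\zeta_{1}^{2},
\end{align*}
and in the jump case,
\begin{align*}
V^{(j)} &= 2\int_{0}^{1}\sigma_{s}^{4}ds + 4\sum_{0\leq s\leq 1}\sigma_{s}^{2}(\Delta X_{s})^{2} + 8\zeta_{1}\Big[\int_{0}^{1}\sigma_{s}^{2}ds + \sum_{0\leq s\leq 1}(\Delta X_{s})^{2}\Big].
\end{align*}
Both quantities are $\mathcal{F}^{(0)}$-measurable and, by Assumption 1(e), almost surely strictly positive, which is what lets us divide by $\sqrt{V}$.

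The second step is to verify $n\widehat{V}^{(c)}\stackrel{\mathrm{P}}{\to}V^{(c)}$ and $n\widehat{V}^{(j)}\stackrel{\mathrm{P}}{\to}V^{(j)}$. For (i), Proposition \ref{Prp3} yields $\widehat{A}^{(1,1)}(r)\stackrel{\mathrm{P}}{\to}A^{(1,1)}(r)=\int_{0}^{1}\sigma_{s}^{2r}ds$ for $r=1,2$, and Proposition \ref{Prp2} yields $\widehat{\zeta}_{1}\stackrel{\mathrm{P}}{\to}\zeta_{1}$; the continuous mapping theorem then gives $n\widehat{V}^{(c)}\stackrel{\mathrm{P}}{\to}V^{(c)}$. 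For (ii), the same two facts together with $\widehat{D}^{(1,1)}_{1,1}(1,1)\stackrel{\mathrm{P}}{\to}D^{(1,1)}_{1,1}(1,1)=\sum_{0\leq s\leq 1}\sigma_{s}^{2}(\Delta X_{s})^{2}$ from Proposition \ref{Prp3} and $[\widehat{X,X}]^{\mathrm{SIML}}_{(1,1)}\stackrel{\mathrm{P}}{\to}\int_{0}^{1}\sigma_{s}^{2}ds+\sum_{0\leq s\leq 1}(\Delta X_{s})^{2}$ from (\ref{SIML2}) combine to give $n\widehat{V}^{(j)}\stackrel{\mathrm{P}}{\to}V^{(j)}$.

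The final step is a Slutsky-type argument for stable convergence. Writing $(\widehat{V})^{-1/2}(\mathrm{RV}(Y)-U_{0}) = \sqrt{n}(\mathrm{RV}(Y)-U_{0})/\sqrt{n\widehat{V}}$, the numerator converges $\mathcal{F}^{(0)}$-stably to a mean-zero $\mathcal{F}^{(0)}$-conditionally Gaussian variable $U$ with variance $V$, while the denominator converges in probability to the $\mathcal{F}^{(0)}$-measurable limit $\sqrt{V}$. By the standard property that stable convergence is preserved under multiplication by processes converging in probability to $\mathcal{F}^{(0)}$-measurable limits (see, e.g., \cite{JP(2012)} or \cite{HL(2015)}), the ratio converges stably to $U/\sqrt{V}$, which is unconditionally $N(0,1)$ because it is $\mathcal{F}^{(0)}$-conditionally $N(0,1)$.

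The only subtlety, and what I would treat carefully, is the invocation of stable convergence under random normalization by the $\mathcal{F}^{(0)}$-measurable factor $\sqrt{V}$; this is the step that would fail under mere convergence in distribution and is the reason the stable-mode convergence in (\ref{RV1})-(\ref{RV2}) is essential. Everything else is a direct algebraic combination of the already-stated consistency results.
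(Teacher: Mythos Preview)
Your proposal is correct and follows exactly the approach the paper intends: the paper's own proof simply states that the result ``immediately follows from the remarks in Section 3, Theorem 2, Propositions \ref{Prp2} and \ref{Prp3},'' and what you have written is a careful spelling-out of that sentence---stable CLT from \cite{KK(2015)}, consistency of $n\widehat V$ via Propositions \ref{Prp2}--\ref{Prp3} and (\ref{SIML2}), then the stable-convergence Slutsky step. One cosmetic note: the indices you name for $\widehat A^{(1,1)}(r)$ should match the paper's estimator (which uses $\widehat A^{(1,1)}(2)$ for $\int_0^1\sigma_s^4\,ds$, etc.), but the substance of your argument is unchanged.
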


\begin{corollary}
Suppose Assumptions 1, 2 and 3 are satisfied with $q=0$ in Assumption 2. Then, under the null hypothesis $\mathbb{H}_{0}$, we have  
\begin{align}
(\widehat{V}^{(j)}_{1,2})^{-1/2}(T^{(n)}-1) &\overset{\mathcal{L}}{\longrightarrow} \text{N}(0,1). \nonumber
\end{align}
\end{corollary}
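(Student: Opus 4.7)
The plan is to combine the stable convergence in Theorem 2 with the consistency of the variance estimator provided by Propositions 5 and 6, and then invoke the standard self-normalization device for stably converging sequences. Throughout the argument I work on the event $\mathbb{H}_{0}=\Omega^{(j)}$, on which $S_{0}>0$ almost surely.

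First, I would identify the $\mathcal{F}^{(0)}$-conditional variance of the limit $U$ in Theorem 2. Because the three numerator summands $U^{(1,2)}_{1,1},U^{(1,2)}_{2},U^{(1,2)}_{3}$ are $\mathcal{F}^{(0)}$-conditionally mutually independent Gaussian with the variances listed in Theorem 1, and the denominator $(U^{(1,2)}_{0})^{2}=S_{0}^{2}$ is $\mathcal{F}^{(0)}$-measurable, the conditional variance of $U$ equals
\[
\sigma_{\infty}^{2}:=\frac{F_{2,2}+8\zeta_{2}J^{(1,2)}(4,2)+8\zeta_{1}J^{(1,2)}(2,4)}{S_{0}^{2}}.
\]
Theorem 2 can thus be written as $\sqrt{n}(T^{(n)}-1)\overset{\mathcal{L}-s}{\longrightarrow}\sqrt{\sigma_{\infty}^{2}}\,Z$, where $Z\sim N(0,1)$ lives on an extension of $(\Omega,\mathcal{F},P)$ and is independent of $\mathcal{F}^{(0)}$.

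Second, I would show that $n\,\widehat{V}^{(j)}_{1,2}\stackrel{\mathrm{P}}{\longrightarrow}\sigma_{\infty}^{2}$. Proposition 5 gives $\widehat{\zeta}_{m}\stackrel{\mathrm{P}}{\longrightarrow}\zeta_{m}$ for $m=1,2$, while Proposition 6 delivers $\widehat{F}^{(1,2)}(2,2)\stackrel{\mathrm{P}}{\longrightarrow}F_{2,2}$ and $\widehat{J}^{(1,2)}(r,s)\stackrel{\mathrm{P}}{\longrightarrow}J^{(1,2)}(r,s)$ for the relevant exponents; in particular $\widehat{J}^{(1,2)}(2,2)\stackrel{\mathrm{P}}{\longrightarrow}S_{0}>0$ on $\Omega^{(j)}$. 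The continuous mapping theorem then assembles these building blocks into the claimed convergence of the ratio that defines $n\,\widehat{V}^{(j)}_{1,2}$.

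Third, I would close with the standard self-normalization principle for stably converging sequences: if $X_{n}\overset{\mathcal{L}-s}{\longrightarrow}\sqrt{\sigma_{\infty}^{2}}\,Z$ with $\sigma_{\infty}^{2}$ being $\mathcal{F}^{(0)}$-measurable and $\widehat{\sigma}_{n}^{2}\stackrel{\mathrm{P}}{\longrightarrow}\sigma_{\infty}^{2}$, then $X_{n}/\widehat{\sigma}_{n}\overset{\mathcal{L}}{\longrightarrow}N(0,1)$. Applied with $X_{n}=\sqrt{n}(T^{(n)}-1)$ and $\widehat{\sigma}_{n}^{2}=n\,\widehat{V}^{(j)}_{1,2}$, this yields
\[
(\widehat{V}^{(j)}_{1,2})^{-1/2}(T^{(n)}-1)=\frac{\sqrt{n}(T^{(n)}-1)}{\sqrt{n\,\widehat{V}^{(j)}_{1,2}}}\overset{\mathcal{L}}{\longrightarrow}N(0,1),
\]
which is precisely the statement of Corollary 2. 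The main obstacle is the second step: one must verify carefully that Proposition 6 actually covers $\widehat{F}^{(1,2)}(2,2)$, whose building blocks $\widehat{D}^{(l,m)}_{p,q}(\cdot,\cdot)$ combine truncated spot-covariance proxies with the bias correction $-2\delta_{pq}\sqrt{\widehat{\zeta}_{p}\widehat{\zeta}_{q}}$ that removes the nonnegligible microstructure contribution introduced by the small noise. Once this consistency is in hand and $S_{0}>0$ on $\Omega^{(j)}$ is invoked to legitimise the division, the remainder is algebra plus the standard continuity properties of stable convergence.
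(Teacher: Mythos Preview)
Your proposal is correct and follows exactly the approach the paper itself takes: the paper's own proof of Corollary 2 is a one-line reference to Theorem~2 together with Propositions~\ref{Prp2} and~\ref{Prp3}, and you have merely spelled out how these three ingredients combine via the self-normalization property of stable convergence. The only caveat is a cosmetic one inherited from the paper: the denominator $(U_{0}^{(1,2)})^{2}$ in Theorem~2 should in fact be $U_{0}^{(1,2)}=S_{0}$ (as the derivation $T^{(n)}-1=(S^{(n)}_{2,2,2}-S^{(n)}_{1,2,2})/S^{(n)}_{1,2,2}$ shows), which is also what the estimator $\widehat{V}^{(j)}_{1,2}$ with its unsquared $\widehat{J}^{(1,2)}(2,2)$ in the denominator is actually targeting.
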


\section{Numerical Experiments}

In this section, we report several results of numerical experiment. We simulate a data generating process according to the procedure in \cite{CT(2004)}. First we consider the estimation of integrated volatility. We used following data generating processes:
\begin{align}
dX_{t} &= \sigma_{t}dW_{t} + dJ^{cp}_{t}, \label{CP} \\
dX_{t} &= \sigma_{t}dW_{t} + dJ^{\beta}_{t},  \label{ST}
\end{align}
where $W$ is a standard Brownian motion. $J_{t}^{cp}$ is a compound Poisson process in $t \in [0,1]$ where the intensity of the Poisson process is $\lambda = 10$ and the jump size  is a uniform distribution on $[-0.3,-0.05]\cup[0.05, 0.3]$. $J^{\beta}_{t}$ is a $\beta$-stable process with $\beta=1.5$. We also set the truncate level $\alpha \Delta_{n}^{\theta}$ as $\alpha = 2 $ and $\theta = 0.48$. These values are also used in the simulation of the co-jump test. 
As a stochastic volatility model, we use
\begin{align} \label{EE1}
d\sigma_{t}^{2} &= \alpha(\beta - \sigma_{t}^{2})dt + \sigma_{t}^{2}dW_{t}^{\sigma},
\end{align}
where $\alpha=5$, $\beta=0.2$ and $\rho= E[dW_{t}dW_{t}^{\sigma}] = -0.5$. For the market microstructure noise, we have adopted the three types of  Gaussian noise $\text{N}(0, \zeta \Delta_{n})$ with $\zeta = 0, 10^{-4}$ and $10^{-2}$ (we call these cases as (i), (ii) and (iii)). To estimate $\zeta$, we used SIML with $p= 0.49$ in (\ref{SIMLp}) and we also use this value in the following simulations. We also consider following cases:
\begin{itemize}
\item CJ1: $1/\Delta_{n} = 20,000$, $X$ follows the process (\ref{CP}).
\item CJ2: $1/\Delta_{n} = 30,000$, $X$ follows the process (\ref{CP}).
\item SJ1: $1/\Delta_{n} = 20,000$, $X$ follows the process (\ref{ST}).
\item SJ2: $1/\Delta_{n} = 30,000$, $X$ follows the process (\ref{ST}).
\end{itemize}

CJ and SJ correspond to a finite activity jump case and an infinite activity jump case respectively. We present the root mean square errors (RMSE) for each case in Table 1. Compared with CJ, the RMSEs in SJ tends to be large. This is because of the difficulty in a finite sample case to distinguish infinite activity jumps and the other part of the observed process.   
%
%
%
%
%
%
%
%

Next we check the performance of the proposed feasible CLT of RV. We use following data generating processes:
\begin{align}
dX_{t}  &= \sigma_{t}dW_{t},\quad \text{for continuous case}, \nonumber \\
dX_{t}  &= \sigma_{t}dW_{t} + dJ^{cp}_{t}, \quad \text{for jump case}, \label{E2}    
\end{align} 
where the process (\ref{E2}) and $\sigma$ are the  same processes as (\ref{EE1}) and (\ref{CP}). 
Figures 1 and 2 give the standardized empirical densities when the small noise is $\text{N}(0,10^{-2}\Delta_{n})$. The simulation size is $N= 1,000$ and the number of the observations is $n= 20,000$. The red line is the density of standard normal distribution. The left figure corresponds to the bias-variance corrected case implied by the Corollary 1 and the right figure corresponds to the no correction case. We found that contamination of small noise still has the significant effect on the distribution of the limiting random variables when $\zeta_{m}\neq 0$  in the continuous case. We can see this in Figure 1. However we have a good approximation for the finite sample distribution of statistics if we correct the effect of noise by using the small noise asymptotics.   

Finally we give simulation results of the co-jump test. We simulate a two dimensional It\^o semimaritingale by the following model:


\begin{align}
dX_{t}^{(1)} &= \sigma_{t}^{(1)}dW_{t}^{(1)} + Z_{t}^{(1)}dN_{t}^{(1)} + Z_{t}^{(3)}dN_{t}^{(3)}, \nonumber \\
dX_{t}^{(2)} &= \sigma_{t}^{(2)}dW_{t}^{(2)} + Z_{t}^{(2)}dN_{t}^{(2)} + Z_{t}^{(4)}dN_{t}^{(3)}, \nonumber
\end{align}

where $W = (W^{(1)}, W^{(2)})$ is a two dimensional Brownian motion and $N^{(j)}$ for $j=1,2,3$ are Poisson processes with intensities $\lambda_{j}$ which are mutually independent and also independent of $W$. $Z = (Z^{(1)}, Z^{(2)}, Z^{(3)}, Z^{(4)})$ is the vector of jump sizes, cross-sectionally and serially independently distributed with laws $F_{Z^{(j)}}$. 
In this simulation, we set $\lambda_{1}=\lambda_{2}=5$ and $\lambda_{3}=10$, and jump size distributions are $Z_{t}^{(1)}, Z_{t}^{(2)} \sim \text{N}(0,5^{-2})$, $Z_{t}^{(3)}, Z_{t}^{(4)} \sim \text{N}(0,10^{-2})$. 
For the volatility process $\sigma$, we set
\begin{align}
d(\sigma_{t}^{(j)})^{2} &= \alpha_{j}(\beta_{j} - (\sigma_{t}^{(j)})^{2})dt + (\sigma_{t}^{(j)})^{2}dW_{t}^{\sigma^{(j)}},\quad j=1,2, \nonumber
\end{align}
where $\sigma^{(1)}$ and $\sigma^{(2)}$ are independent and $\alpha_{1}=\alpha_{2} = 5$, $\beta_{1} = 0.2$, $\beta_{2} = 0.15$, $E[dW_{t}^{\sigma^{(j)}}dW_{t}^{(j)}] = \rho_{j}dt$, $\rho_{1} = -0.5$ and $\rho_{2} = -0.4$. For the market microstructure noise, we use the four types of Gaussian noise for each component with mean $0$ and the same variance $\epsilon_{1,n} = \epsilon_{2,n} = \zeta/n$ where $\zeta = 10^{-4}$, $10^{-2}$, $10^{-1}$ and $1$(we call these cases as I, II, III and IV). We consider the 5\% significant level for following cases:
\begin{itemize}
\item C1 : $1/\Delta_{n} =20,000$, co-jump.

\item C2 : $1/\Delta_{n} =30,000$, co-jump.


\item D1 : $1/\Delta_{n} =20,000$, no co-jump.

\item D2 : $1/\Delta_{n} =30,000$, no co-jump.

\end{itemize}

In Figure 4, we plot the empirical distribution  obtained from Corollary 2 in the case C1-IV. It is interesting to see that our proposed testing procedure gives a good approximation of the limit distribution of test statistics even in large noise case ($\zeta=1$). In Table 2 we also give the empirical size and power of the proposed co-jump test and the test proposed in \cite{JT(2009)} (we call this JT test). We find that JT test is sensitive to the small noise. Theorem 2 implies that in the presence of small noise, the critical value of the test is larger than that of no noise case. Hence the critical value of JT test is small compared with that of the proposed test and the empirical size of JT test tends to be larger than the significant level.  In particular, when the effect of noise is large (C1-III, C1-IV, C2-III and C2-IV), we can see the size distortion of the JT test. On the other hand, the empirical size of the proposed test is close to $0.05$. This result shows that we need to correct the asymptotic variance of JT test in the presence of small noise.  Additionally, the empirical power of our proposed test is very close to 1 in each case.

\section{Concluding Remarks}

In this paper, we developed the small noise asymptotic analysis when the size of the market microstructure noise depends on the frequency of the observation. By using this approach, 
we can identify the effects of jumps and noise in high-frequency data analysis. We investigated the asymptotic properties of BPV and RV in one dimensional case in the presence of small noise. We  proposed methods to estimate integrated volatility and the asymptotic conditional variance of RV. As a result, feasible central limit theorems of RV is established under the small noise assumption when the latent process $X$ is an It\^o semimartingale. Our method gives a good approximation of the limiting distributions of the sequence of random variables.

We also proposed a testing procedure of the presence of co-jumps when the two dimensional latent process is observed with small noise for testing the null hypothesis that the observed two latent processes have co-jumps. Our proposed co-jump test is an extension the co-jump test  in \cite{JT(2009)} for the noisy observation setting. Estimators of the asymptotic conditional variance of the test statistics can be  constructed in a simple way. 
We found that the empirical size of the proposed test works well in finite sample. In particular, proposed co-jump test has a good performance even when the effect of noise is large.


\appendix 

\section{Proofs} 

\subsection{Proofs for Section 3} 

Throughout Appendix, $K$ denotes a generic constant which may change from line to line. We use the techniques developed in \cite{JP(2012)}.
We can replace Assumption 1 to the local boundedness assumption below and such a replacement can be established by the localizing procedure provided in \cite{JP(2012)}.

\begin{assumption}

We have Assumption 1 and there are a constant $A$ and a nonnegative function $\Gamma$ on  $\mathbb{R}^{d}$ for all $(\omega, t, x) \in \Omega \times \mathbb{R}_{+} \times \mathbb{R}^{d}$ such that 
\[
\max\{||b_{t}(\omega)||,||\sigma_{t}(\omega)||, ||X_{t}(\omega)||, ||\Gamma(x)||\} \leq A.
\]

\end{assumption}
Under Assumption 4, we can obtain following inequalities for It\^o semimartingale.
First we consider the decomposition of $X$,
\[
X_{t} = X_{0} + B_{t} + C_{t} + J_{t},
\]
where 
\begin{align}
B_{t} &= \int_{0}^{t}\left(b_{s} + \int_{\mathbb{R}^{d}}\kappa \circ \delta (s,x)\lambda(dx)\right)ds,\nonumber \\
C_{t} &= \int_{0}^{t}\sigma_{s}dW_{s},\quad J_{t} = \int_{0}^{t}\delta(s,x)(\mu-\nu)(ds,dx). \nonumber
\end{align}
For all $p \geq 1$, $s,t \geq 0$, we obtain the following inequalities: 
\begin{align}
E(\sup_{0\leq u \leq t}||B_{s+u} - B_{s}||^{p}|\mathcal{F}^{(0)}_{s}) &\leq Kt^{p}, \label{A1} \\
E(\sup_{0\leq u \leq t}||C_{s+u} - C_{s}||^{p}|\mathcal{F}^{(0)}_{s}) &\leq Kt^{p/2},\label{A3} \\
E(\sup_{0\leq u \leq t}||J_{s+u} - J_{s}||^{p}|\mathcal{F}^{(0)}_{s}) &\leq Kt^{{p\over h}\wedge1}. \label{A4} 
\end{align}
See Section 2.1.5 of \cite{JP(2012)} for details of the derivation of these inequalities.

\vspace{.3cm}
\begin{proof}[Proof of Proposition 1] 
First, we have
\begin{align}
\widetilde{V}_{2r}^{(n)}(Y) - \widetilde{V}_{2r}^{(n)}(X)&= \sum_{i=1}^{n}\sum_{p=1}^{2r}\binom{2r}{p}(\Delta_{i}^{n}X)^{2r-p}(\epsilon_{n}\Delta v_{i})^{p}. \nonumber 
\end{align}
By using the inequalities of It\^o semimartingale, it follows that
\begin{align}
E\left[|(\Delta_{i}^{n}X)^{2r-p}(\epsilon_{n}\Delta v_{i})^{p}|\right] \leq K\Delta_{n}^{({1 \over 2} + q)p}\times \Delta_{n}^{(r-{p\over 2})} = K\Delta_{n}^{q+r},\quad i=1,\hdots,n.  \nonumber 
\end{align}
Therefore, we obtain $\widetilde{V}_{2r}^{(n)}(Y) - \widetilde{V}_{2r}^{(n)}(X) = O_{P}(\Delta_{n}^{q+r-1})$. Second, we have
\begin{align}
\widetilde{V}_{r,s}^{(n)}(Y) - \widetilde{V}_{r,s}^{(n)}(X) &= \sum_{i=1}^{n-1}\left(|\Delta_{i}^{n}X + \epsilon_{n}\Delta v_{i}|^{r}|\Delta_{i+1}^{n}X + \epsilon_{n}\Delta v_{i+1}|^{s} - |\Delta_{i}^{n}X|^{r}|\Delta_{i+1}^{n}X|^{s}\right). \nonumber 
\end{align}
By using the inequalities (\ref{A1}), (\ref{A3}) and (\ref{A4}) repeatedly, we have $E[|\Delta_{i}^{n}X/\sqrt{\Delta_{n}}|^{p}] \leq K$ for $p\geq 1$. Then, we have 
\begin{align}
&E\left[|\Delta_{n}^{(r+s)/2}\left(|\Delta_{i}^{n}X + \epsilon_{n}\Delta v_{i}|^{r}|\Delta_{i+1}^{n}X + \epsilon_{n}\Delta v_{i+1}|^{s}  - |\Delta_{i}^{n}X|^{r}|\Delta_{+}^{n}X|^{s}\right)|\right] \nonumber \\
&\quad = E\left[\left|\left(\left|{\Delta_{i}^{n}X \over \sqrt{\Delta_{n}}} + {\epsilon_{n}\Delta v_{i} \over \sqrt{\Delta_{n}}}\right|^{r}\left|{\Delta_{i+1}^{n}X \over \sqrt{\Delta_{n}}} + {\epsilon_{n}\Delta v_{i+1} \over \sqrt{\Delta_{n}}}\right|^{s}  - \left|{\Delta_{i}^{n}X \over \sqrt{\Delta_{n}}}\right|^{r}\left|{\Delta_{i+1}^{n}X \over \sqrt{\Delta_{n}}}\right|^{s}\right) \right|\right]\nonumber \\
&\quad \leq K\epsilon_{n}\Delta_{n}^{-1/2} = O(\Delta_{n}^{q}),\quad i=1,\hdots, n. \nonumber 
\end{align}
Therefore, we obtain $\widetilde{V}_{r,s}^{(n)}(Y) - \widetilde{V}_{r,s}^{(n)}(X) = O_{P}(\Delta_{n}^{q-(r+s)/2+1})$.
\end{proof}

\vspace{.3cm}

\begin{proof}[Proof of Proposition 2]
By Proposition 1,
\begin{align}
n^{(r+s)/2-1}\widetilde{V}^{(n)}_{r,s}(Y) - m_{r}m_{s}\int_{0}^{1}\sigma_{s}^{r+s}ds 
&=n^{(r+s)/2-1}\left(\widetilde{V}^{(n)}_{r,s}(Y) - \widetilde{V}^{(n)}_{r,s}(X)\right) \nonumber \\
&\qquad +\left( n^{(r+s)/2-1}\widetilde{V}^{(n)}_{r,s}(X) - m_{r}m_{s}\int_{0}^{1}\sigma_{s}^{r+s}ds\right) \nonumber \\
& = 
O_{P}\left(n^{-q}\right) + O_{P}\left({1 \over \sqrt{n}}\right). \nonumber 
\end{align}
Therefore, if $q>0$, then $n^{(r+s)/2-1}\widetilde{V}^{(n)}_{r,s}(Y)$ converges in probability to $m_{r}m_{s}\int_{0}^{1}\sigma_{s}^{r+s}ds$. If $q> 1/2$, then from Theorem 2.3 of \cite{BGJPS(2006)}, the first part of above decomposition converges in probability to $0$, and the second part converges stably to $U$ defined in Proposition 2. Then we obtain the desired result. 
\end{proof}

\subsection{Proofs for Section 4} 

\vspace{.3cm}
\begin{proof}[Proof of Theorem 1] 

We decompose $S^{(n)}_{k,2,2}(Y)$ as follows:

\begin{align}
S^{(n)}_{k,2,2}(Y) &= U_{1,n} + \sum_{j=1}^{2}U_{2,j,n} + \sum_{j=1}^{3}U_{3,j,n} + \sum_{j=1}^{3}U_{4,j,n}, \label{S_decomp} 
\end{align}
where 
\begin{align}
U_{1,n} &= \sum_{i=1}^{[n/k]}(\Delta_{i}^{n} X^{(k,1)})^{2}(\Delta_{i}^{n}X^{(k,2)})^{2}, \quad U_{2,1,n} = 2R^{(n)}_{k,2,1}(X),  \quad U_{2,2,n} = 2R^{(n)}_{k,1,2}(X), \nonumber \\
U_{3,1,n} &= \epsilon_{2,n}^{2}\sum_{i=1}^{[n/k]}(\Delta_{i}^{n} X^{(k,1)})^{2}(\Delta v_{i}^{(k,2)})^{2}, \quad U_{3,2,n} = 4\epsilon_{1,n}\epsilon_{2,n}\sum_{i=1}^{[n/k]}(\Delta_{i}^{n} X^{(k,1)})(\Delta_{i}^{n} X^{(k,2)})(\Delta v_{i}^{(k,1)})(\Delta v_{i}^{(k,2)}), \nonumber \\
U_{3,3,n} &= \epsilon_{1,n}^{2}\sum_{i=1}^{[n/k]}(\Delta_{i}^{n} X^{(k,2)})^{2}(\Delta v_{i}^{(k,1)})^{2}, \quad U_{4,1,n} = 2\epsilon_{1,n}\epsilon_{2,n}^{2}\sum_{i=1}^{[n/k]}(\Delta_{i}^{n} X^{(k,1)})(\Delta v_{i}^{(k,1)})(\Delta v_{i}^{(k,2)})^{2},\nonumber \\
U_{4,2,n} &= 2\epsilon_{1,n}^{2}\epsilon_{2,n}\sum_{i=1}^{[n/k]}(\Delta_{i}^{n} X^{(k,2)})(\Delta v_{i}^{(k,1)})^{2}(\Delta v_{i}^{(k,2)}), \quad U_{4,3,n} = \epsilon_{1,n}^{2}\epsilon_{2,n}^{2} \sum_{i=1}^{[n/k]}(\Delta v_{i}^{(k,1)})^{2}(\Delta v_{i}^{(k,2)})^{2}, \nonumber
\end{align}
and where $R_{k,1,1}^{(n)}(X)$, $R_{k,1,2}^{(n)}(X)$, $\Delta v_{i}^{(k,1)}$ and $\Delta v_{i}^{(k,2)}$ are defined in (\ref{Rkrs}) and (\ref{vk}). For the proof of Theorem 1, we first prove the stable convergence of $U_{1,n}$, $U_{2,1,n}$ and $U_{2,2,n}$. Then we prove the joint convergence of these terms. Finally, we will prove that $\sum_{j=1}^{3}U_{3,j,n}$ and $\sum_{j=1}^{3}U_{4,j,n}$ in the decomposition of $S^{(n)}_{k,2,2}(Y)$ in (\ref{S_decomp}) is asymptotically negligible. 

\vspace{15pt}
{\bf Evaluation of $U_{1,n}$}$\;:\;$ 
By Theorem 3.3.1 and Proposition 15.3.2 in \cite{JP(2012)}, we have $U_{1,n} \stackrel{\mathrm{P}}{\longrightarrow} \sum_{0\leq u\leq1}(\Delta X_{u}^{(1)})^{2}(\Delta X_{u}^{(2)})^{2} \equiv U_{0}$, 
and 
\begin{align}
\sqrt{n}(U_{1,n} - U_{0}) &\overset{\mathcal{L}-s}{\longrightarrow} \text{N}(0, kF_{2,2}). \nonumber
\end{align}

\vspace{15pt}
{\bf Evaluation of $U_{2,1,n}$ and $U_{2,2,n}$}$\;:\;$ 
Let $(Z^{(1)}_{p})$ and $(Z^{(2)}_{p})$ be the mutually independent  sequences of i.i.d. standard normal random variables defined on the second filtered probability space $(\Omega^{(1)}, \mathcal{F}^{(1)}, (\mathcal{F}^{(1)}_{t})_{t \geq 0}, P^{(1)})$, and $(\tau_{p})$ be the co-jump times of the first and second component of the process $(X_{t} = (X_{t}^{(1)}, X_{t}^{(2)}))_{0 \leq t \leq 1}$. We will prove the following result:
\begin{align}
(\sqrt{n}&U_{2,1,n}, \sqrt{n}U_{2,2,n}) \stackrel{\mathcal{L}-s}{\longrightarrow} (U^{(1,2)}_{2}, U^{(1,2)}_{3}), \nonumber \\
U_{2}^{(1,2)} &= \sqrt{8\zeta_{2}}\sum_{p =1}^{\infty}(\Delta X^{(1)}_{\tau_{p}})^{2}(\Delta X^{(2)}_{\tau_{p}})Z^{(2)}_{p}1_{\{\tau_{p}\leq 1\}}, \nonumber \\
U_{3}^{(1,2)} &= \sqrt{8\zeta_{1}}\sum_{p =1}^{\infty}(\Delta X^{(1)}_{\tau_{p}})(\Delta X^{(2)}_{\tau_{p}})^{2}Z^{(1)}_{p}1_{\{\tau_{p}\leq 1\}}. \nonumber 
\end{align}
For the first step of the proof, we prove our result in a simple case when the process $X$ has at most finite jumps in the interval $[0,1]$.  Then we prove the general case when $X$ may have infinite jumps in the interval $[0,1]$. 

\vspace{10pt}
(Step1) : In this step, we introduce some notations. Let $(T_{p})$ be the reordering of the double sequence $(T(m,j): m,j \geq 1)$. A random variable $T(m,j)$ is the successive jump time of the Poisson process $1_{A_{m}\backslash A_{m-1}} \star \mu$ where $A_{m} = \{z:\Gamma(z) > 1/m\}$. Let $\mathcal{P}_{m}$ denote the set of all indices $p$ such that $T_{p} = T(m',j)$ for some $j \geq i$ and some $m' \leq m$. For $(i-1)k\Delta_{n} < T_{p} \leq ik\Delta_{n}$, we define following random variables:
\begin{align}
W^{(k,m)}_{-}(n,p) &= {1 \over \sqrt{k\Delta_{n}}}(X^{(m)}_{T_{p}-} - X^{(m)}_{(i-1)k\Delta_{n}}), \nonumber \\
W^{(k,m)}_{+}(n,p) &= {1 \over \sqrt{k\Delta_{n}}}(X^{(m)}_{ik\Delta_{n}} - X^{(m)}_{T_{p}}), \nonumber \\
W^{(k,m)}(n,p) &= W^{(k,m)}_{-}(n,p) + W^{(k,m)}_{+}(n,p). \nonumber 
\end{align} 
We also set following stochastic processes:
\begin{align}
b(m)_{t} &= b_{t} - \int_{A_{m}\cap \{z:||\delta(t,z)||\leq 1\}}\delta(t,z)\lambda(z),\nonumber \\
X(m)_{t} &= X_{0} + \int_{0}^{t}b(m)_{s}ds + \int_{0}^{t}\sigma_{s}dW_{s} + (\delta 1_{A_{m}^{c}}) \star (\mu-\nu)_{t},\nonumber \\
X'(m) &= X - X(m) = (\delta 1_{A_{m}})\star \mu. \nonumber 
\end{align}
Let $\Omega_{n}(m)$ denote the set of  all $\omega$ such that each interval $[0,1]\cap((i-1)k\Delta_{n}, ik\Delta_{n}]$ contains at most one jump of $X'(m)$, and that $||X(m)(\omega)_{t+s} - X'(m)(\omega)_{t}||\leq 2/m$ for all $t \in [0,1]$ and $s \in [0,\Delta_{n}]$.
Moreover, let  　
\begin{align}
\eta_{p}^{n,k} &=\left(\Delta X^{(1)}_{T_{p}} + \sqrt{k\Delta_{n}}W^{(k,1)}(n,p)\right)^{2}\left(\Delta X^{(2)}_{T_{p}} + \sqrt{k\Delta_{n}}W^{(k,2)}(n,p)\right), \nonumber \\
\widetilde{\eta}_{p}^{n,k} &= \sqrt{\zeta_{2}}\eta_{p}^{n,k}(\Delta v_{i}^{(k,2)}), \nonumber \\
Y_{n}(m) &= \sum_{p \in \mathcal{P}_{m}: T_{p}  k\Delta_{n}[1/k\Delta_{n}]} \widetilde{\eta}_{p}^{n,k}. \nonumber 
\end{align}
By the above notations, on the set $\Omega_{n}(m)$, we have
\begin{align}\label{F00}
\sqrt{n}U_{2,1,n} &= 2\sqrt{n}R^{(n)}_{2,2,1}(X(m)) + 2\sqrt{n}R^{(n)}_{2,2,1}(X'(m)).  
\end{align}

\vspace{10pt}
(Step 2) : In this step, we will prove the stable convergence of $U_{2,1,n}$ and $U_{2,2,n}$. A Taylor expansion of $f(x_{1}, x_{2}) = x_{1}^{2}x_{2}$ yields $\eta_{p}^{n,k} - (\Delta X^{(1)}_{T_{p}})^{2}(\Delta X^{(2)}_{T_{p}}) \stackrel{\mathrm{P}}{\to} 0$. By Proposition 4.4.10 in \cite{JP(2012)}, we have
\begin{align}\label{F0}
\left(\eta_{p}^{n,k}\right)_{p \geq 1} \stackrel{\mathcal{L}-s}{\longrightarrow} \left((\Delta X^{(1)}_{T_{p}})^{2}(\Delta X^{(2)}_{T_{p}})\right)_{p \geq 1}.
\end{align}
The sequence $(\Delta v_{i}^{(k,2)})$ for $k=1,2$ consists of correlated Gaussian random variables with mean $0$ and has the covariance structure
\begin{align}
\text{Cov}(\Delta v_{i}^{(k,2)}, \Delta v_{j}^{(k,2)}) &= 
\begin{cases}
2 & \text{if $i=j$} \\
-1 &\!\!\text{ if $|i-j| = 1$} \\
0 &\text{if $|i-j|>1$}
\end{cases}
.\nonumber 
\end{align}
Using the inequalities (\ref{A1}), (\ref{A3}) and (\ref{A4}), if $|i-j|\geq 1$, then we have $E[|\xi_{i}^{n}\xi_{j}^{n}|] \leq K\Delta_{n}^{3/2}$ where $\xi_{i}^{n} = (\Delta_{i}^{n}X^{(k,1)})^{2}(\Delta_{i}^{n}X^{(k,2)})(\Delta v_{1}^{(k,2)})$. Therefore, the correlation between $\xi_{i}^{n}$ and $\xi_{j}^{n}$ when $|i-j|=1$ is asymptotically negligible.   
Since the set $\{T_{p}:p \in \mathcal{P}_{m}\} \cap [0,1]$ is finite, it follows that
\begin{align}\label{F1}
2\sqrt{n}R^{(n)}_{2,2,1}(X'(m)) \stackrel{\mathcal{L}-s}{\longrightarrow} \sqrt{8\zeta_{2}}\sum_{p \in \mathcal{P}_{m}: T_{p}\leq 1}(\Delta X^{(1)}_{T_{p}})^{2}(\Delta X^{(2)}_{T_{p}})Z^{(2)}_{p} \equiv R_{2,2,1}(X'(m)),  
\end{align}
where $(Z_{p}^{(2)})$ is the sequence of i.i.d. standard normal random variables introduced before (Step 1). 

\vspace{10pt}
(Step 3) : We will prove the joint stable convergence of $(U_{1,n}-U_{0}, U_{2,1,n}, U_{2,2,n})$ in this step. From a slight modification of the proof of Theorem 5.1.2 in \cite{JP(2012)}, it is possible to prove
\begin{align}\label{F2}
R_{2,2,1}(X'(m)) \stackrel{\mathrm{P}}{\to} U_{2}^{(1,2)},
\end{align}
\begin{align}\label{F3}
\lim_{m \to \infty}\limsup_{n \to \infty}\mathrm{P}\left(\Omega_{n}(m)\cap \left\{|\sqrt{n}R^{(n)}_{2,2,1}(X(m))| > \eta\right\} \right) = 0,\quad \text{for all $\eta>0$}. 
\end{align}
Combining these with (\ref{F00}) and (\ref{F1}), and by Proposition 2.2.4 in \cite{JP(2012)}, we obtain $\sqrt{n}U_{2,1,n} \stackrel{\mathcal{L}-s}{\longrightarrow} U^{(2,1)}_{2}$. 
We also have  $\sqrt{n}U_{2,2,n} \overset{\mathcal{L}-s}{\longrightarrow} U_{3}^{(1,2)}$ in the same way.
Since $U_{1,n}$, $U_{2,1,n}$ and $U_{2,2,n}$ are $\mathcal{F}^{(0)}$-conditionally mutually independent by the definition of small noise, we obtain the joint convergence
\begin{align}
\sqrt{n}(U_{1,n}-U_{0}, U_{2,1,n}, U_{2,2,n}) \overset{\mathcal{L}-s}{\longrightarrow} (U_{1,k}^{(1,2)}, U_{2}^{(1,2)}, U_{3}^{(1,2)}). \nonumber
\end{align} 
Therefore, we also obtain $\sqrt{n}(U_{1,n} + U_{2,1,n}+ U_{2,2,n}-U_{0})\overset{\mathcal{L}-s}{\longrightarrow} U_{1,k}^{(1,2)} + U_{2}^{(1,2)} + U_{3}^{(1,2)}$.

\vspace{15pt}

{\bf Evaluation of the remaining terms}$\;:\;$ 
We shall prove that the remaining terms $\sum_{j=1}^{3}U_{3,j,n}$ and $\sum_{j=1}^{3}U_{4,j,n}$ in the decomposition of $S^{(n)}_{k,2,2}$ in (\ref{S_decomp}) are asymptotically negligible. Let $\xi_{i}^{n} = \epsilon_{2,n}^{2}(\Delta_{i}^{n}X^{(k,1)})^{2}(\Delta v_{i}^{(k,2)})^{2}$. Then 
$U_{3,1,n} = \sum_{i=1}^{[n/k]}\xi_{i}^{n}$. 
Using the inequalities (\ref{A1}), (\ref{A3}) and (\ref{A4}), we have
\begin{align}
E[|\xi_{i}^{n}|] = \epsilon_{2,n}^{2}E\left[(\Delta v_{i}^{(k,2)})^{2}\right]E[(\Delta_{i}^{n}X^{(k,1)})^{2}] \leq K\Delta_{n}^{2}. \nonumber 
\end{align}
Therefore, we obtain $U_{3,1,n} = O_{P}(1/n)$. Similarly, we have $U_{3,2,n} = U_{3,3,n} = U_{4,1,n} = U_{4,2,n} = O_{P}(1/n)$.
Since $n^{-1}\sum_{i=1}^{[n/k]}(\Delta v_{i}^{(k,1)})^{2}(\Delta v_{i}^{(k,2)})^{2} \stackrel{\mathrm{P}}{\to} 4\zeta_{1}\zeta_{2}$, we have $U_{4,3,n} = O_{P}(1/n\sqrt{n})$. 
Consequently,  we obtain $\sqrt{n}(S_{k,2,2}^{(n)}(Y) - U_{0}) \overset{\mathcal{L}-s}{\longrightarrow} U_{1,k}^{(1,2)} + U_{2}^{(1,2)} + U_{3}^{(1,2)}$.
\end{proof}

\vspace{10pt}

\begin{proof}[Proof of Theorem 2]

We first prove a technical tool for the proof of Theorem 2. The result of Theorem 2 follows immediately from the following proposition. 
\begin{proposition}\label{Prp4}
Suppose Assumptions 1, 2 and 3 are satisfied with $q=0$ in Assumption 2. Let 
\begin{align}
S_{0} &= \sum_{0\leq s\leq1}(\Delta X_{s}^{(1)})^{2}(\Delta X_{s}^{(2)})^{2},\nonumber  \\ 
W^{(n)} &= \sqrt{n}[(S_{2,2,2}^{(n)}(Y) - S_{0}) - (S_{1,2,2}^{(n)}(Y) - S_{0})]. \nonumber 
\end{align}
Then, in restriction to the set $\Omega^{(j)}$, we have 
\begin{align}
W^{(n)} \overset{\mathcal{L}-s}{\longrightarrow} U^{(1,2)} = U_{1,1}^{(1,2)} + U_{2}^{(1,2)} + U_{3}^{(1,2)}. \nonumber 
\end{align}
%
\end{proposition}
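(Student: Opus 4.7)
The plan is to apply the decomposition (\ref{S1}) to $S_{k,2,2}^{(n)}(Y)$ for $k=1$ and $k=2$ separately and subtract. Since the $o_P(1/\sqrt{n})$ remainders vanish after multiplication by $\sqrt{n}$, the problem reduces to establishing the joint stable convergence, in restriction to $\Omega^{(j)}$,
\begin{align}
\bigl(\sqrt{n}[S_{2,2,2}^{(n)}(X)-S_{1,2,2}^{(n)}(X)],\;2\sqrt{n}R_{1}^{(n)},\;2\sqrt{n}R_{2}^{(n)}\bigr) &\overset{\mathcal{L}-s}{\longrightarrow} (U_{1,1}^{(1,2)},U_{2}^{(1,2)},U_{3}^{(1,2)}),\nonumber
\end{align}
of the three $\mathcal{F}^{(0)}$-conditionally Gaussian limits appearing in Theorem 1, together with $\mathcal{F}^{(0)}$-conditional mutual independence. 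Summing the three components then yields the claim.

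For the $X$-part I would adapt the jump-localization argument of \cite{JP(2012)}, Section 5.1, used in the proof of Theorem 1 for $U_{1,n}$, applied simultaneously at the two spacings. The key geometric observation is that the $k{=}2$ interval containing a jump $\tau_p$ is the union of the $k{=}1$ interval containing $\tau_p$ with a single adjacent $k{=}1$ interval; hence the pre- and post-jump Brownian fluctuations entering the $k{=}1$ expansion are part of those entering the $k{=}2$ expansion, and the difference picks up exactly one independent Brownian increment of length $\Delta_n$ per jump. The same moment computation that gave the conditional variance $kF_{2,2}$ in Theorem 1 then shows the limiting $\mathcal{F}^{(0)}$-conditional variance of the difference equals $F_{2,2}$, identifying the limit as $U_{1,1}^{(1,2)}$.

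For the $R$-parts I would carry out Steps 1--3 of the proof of Theorem 1 simultaneously at $k=1,2$. For a jump $\tau_p\in((i_k-1)k\Delta_n,i_k k\Delta_n]$, the $k{=}1$ block containing $\tau_p$ is one of the two $\Delta_n$-blocks composing the $k{=}2$ block, so exactly one endpoint of the $v$-sequence is common to $\Delta v_{i_1}^{(1,l)}$ and $\Delta v_{i_2}^{(2,l)}$. A direct computation yields $\Cov(\Delta v_{i_1}^{(1,l)},\Delta v_{i_2}^{(2,l)})=1$ and hence $\Var(\Delta v_{i_2}^{(2,l)}-\Delta v_{i_1}^{(1,l)})=2$. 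Combined with the prefactor $2$, the summation over the (finitely many, by localization) jumps, and Assumption 2, this reproduces precisely the $\mathcal{F}^{(0)}$-conditional variances $8\zeta_{2}\sum(\Delta X^{(1)})^{4}(\Delta X^{(2)})^{2}$ and $8\zeta_{1}\sum(\Delta X^{(1)})^{2}(\Delta X^{(2)})^{4}$ of $U_{2}^{(1,2)}$ and $U_{3}^{(1,2)}$.

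The joint stable convergence and $\mathcal{F}^{(0)}$-conditional independence then follow as in Theorem 1: the Brownian-fluctuation part lives on the first filtered space, while the two noise parts lie on the second and involve the two independent noise coordinates. Negligibility of the higher-order noise-only remainders (the analogues of $U_{3,\cdot,n}$ and $U_{4,\cdot,n}$ in the decomposition of $S_{k,2,2}^{(n)}(Y)$) is immediate from the same moment bounds used there. The main obstacle will be the careful bookkeeping of the shared Brownian and noise increments across the two spacings $k=1,2$; once the covariance identities above are verified, assembling the stable limit is routine.
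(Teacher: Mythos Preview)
Your proposal is correct and follows essentially the same route as the paper's proof: decompose $W^{(n)}$ via (\ref{S1}) into the three leading terms $\sqrt{n}(S_{2,2,2}^{(n)}(X)-S_{1,2,2}^{(n)}(X))$, $2\sqrt{n}(R_{2,1,2}^{(n)}(X)-R_{1,1,2}^{(n)}(X))$, $2\sqrt{n}(R_{2,2,1}^{(n)}(X)-R_{1,2,1}^{(n)}(X))$, invoke Proposition 15.3.2 of \cite{JP(2012)} for the first and the argument of Step~2 of Theorem~1 for the others, and conclude by $\mathcal{F}^{(0)}$-conditional independence. The paper handles the noise difference by the algebraic rewrite $\sum_{l=2(i-1)+1}^{2i}\Delta v_{l}^{(1)}-\Delta v_{j}^{(1)}$, which is exactly your observation that $\Delta v_{i_2}^{(2,l)}-\Delta v_{i_1}^{(1,l)}$ is the increment over the adjacent $k{=}1$ sub-block; your covariance computation $\Var(\Delta v_{i_2}^{(2,l)}-\Delta v_{i_1}^{(1,l)})=2$ is the same identity in different clothing.
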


\vspace{10pt}
\begin{proof}[Proof of Proposition \ref{Prp4}] 

By using the inequalities (\ref{A1}), (\ref{A3}) and (\ref{A4}), we have
\begin{align}
&\sum_{i=1}^{[n/2]}(\Delta_{i}^{n}Y^{(2,1)})^{2}(\Delta_{i}^{n}Y^{(2,2)})^{2}\nonumber \\
&\quad =\sum_{i=1}^{[n/2]}(\Delta_{i}^{n}X^{(2,1)})^{2}(\Delta_{i}^{n}X^{(2,2)})^{2} + 2R^{(n)}_{2,1,2}(X) + 2R^{(n)}_{2,2,1}(X) +O_{P}(1/\sqrt{n}),
\nonumber 
\end{align}
where $R_{k,r,s,}^{(n)}(X)$ are definded in (\ref{Rkrs}). 
We decompose $W^{(n)}$ into three leading terms:
\begin{align}
W^{(n)} &= \sqrt{n}(S_{2,2,2}^{(n)}(X) - S_{1,2,2}^{(n)}(X)) + 2\sqrt{n}(R_{2,1,2}^{(n)}(X) - R_{1,1,2}^{(n)}(X)) \nonumber  \\
 &\quad + 2\sqrt{n}(R_{2,2,1}^{(n)}(X) - R_{1,2,1}^{(n)}(X))  + o_{P}(1) \nonumber \\
&= \text{I}_{n,1}+ \text{II}_{n,2} + \text{III}_{n,3} + o_{P}(1), \nonumber
\end{align}
Because of the independence of the noise $v_{i}^{(1)}$ and $v_{i}^{(2)}$, the three terms $\text{I}_{n,1}$, $\text{II}_{n,1}$ and $\text{III}_{n,1}$ are asymptotically mutually independent. Therefore, it suffices to evaluate each term. We can rewrite $\text{II}_{n,2}$ by using the estimation inequalities (\ref{A1}), (\ref{A3}) and (\ref{A4}),
\begin{align}
\text{II}_{n,2} &= 2\sqrt{\zeta_{1}}\sum_{i=1}^{[n/2]}\sum_{j=2(i-1)+1}^{2i}(\Delta_{j}^{n}X^{(1)})(\Delta_{j}^{n}X^{(2)})^{2}\left(\sum_{l=2(i-1)+1}^{2i}\Delta v_{l}^{(1)} - \Delta v_{j}^{(1)}\right) + O_{P}(1/\sqrt{n}) \nonumber  \\
&= \widetilde{\text{II}}_{n,2} + O_{P}(1/\sqrt{n}). \nonumber 
\end{align}
By the application of Proposition 15.3.2 in \cite{JP(2012)} to $\text{I}_{n,1}$ and the similar argument of the evaluation of $U_{2,1,n}$ in the proof of Theorem 1, we obtain that 
$W^{(n)}$ converge stably to $U_{1}^{(1,2)} + U_{2}^{(1,2)} + U_{3}^{(1,2)}$. \end{proof}
Finally, we prove the stable convergence of $T^{(n)}$. 
From the definition of $T^{(n)}$, we have
\begin{align}
T^{(n)}- 1 = {S_{2,2,2}^{(n)}(Y) - S_{1,2,2}^{(n)}(Y) \over S_{1,2,2}^{(n)}(Y)}. \nonumber
\end{align}
Then by Proposition \ref{Prp4}, we obtain the desired result. 
\end{proof}

\begin{proof}[Proof of Proposition \ref{T_SP}]
Since the asymptotic size of the test statistic $T^{(n)}$ follows from Theorem \ref{T_Asy}, we prove the consistency of the test here. 
To describe the limit variable of $T^{(n)}$ under the alternative hypothesis $\mathbb{H}_{1}$, we introduce some notations. We use some notations in \cite{JT(2009)}. 
\begin{itemize}
\item a sequence $(\kappa_{p})$ of uniform variables on $[0,1]$.
\item a sequence $(L_{p})$ of uniform variables on $\{0,1\}$, that is, $\mathrm{P}(L_{p} = 0) = \mathrm{P}(L_{p} = 1) = 1/2$. 
\item four sequences $(U_{p})$, $(U'_{p})$, $(\widetilde{U}_{p})$, $(\widetilde{U}'_{p})$ of two-dimensional $\text{N}(0, I_{2})$ variables.
\item two sequences $(\overline{Z}_{p},\overline{Z}''_{p})$,  $(\widetilde{Z}_{p},\widetilde{Z}''_{p})$ of two-dimensional 
$\text{N}
\left(
\left(
\begin{array}{c}
0 \\
0
\end{array}
\right),
\left(
\begin{array}{cc}
2 & -1 \\
-1 & 2
\end{array}
\right)
\right)$ variables.
\end{itemize}
The variables introduced above are defined on $(\Omega^{(1)}, \mathcal{F}^{(1)}, (\mathcal{F}^{(1)}_{t})_{t \geq 0}, P^{(1)})$. Then we define two dimensional variables:
\begin{align}
R_{p} &= \sigma_{T_{p}}\left(\sqrt{\kappa_{q}}U_{p} + \sqrt{1-\kappa_{p}}U'_{p}\right), \nonumber \\
R'_{p} &= \sigma_{T_{p}}\left(\sqrt{L_{q}}\widetilde{U}_{p} + \sqrt{1-L_{p}}\widetilde{U}'_{p}\right), \nonumber \\
R''_{p} &= R_{p} + R'_{p}. \nonumber 
\end{align}
We also define following variables:
\begin{align}
\widetilde{D}'' &= \sum_{p: T_{p}\leq 1}\left((\Delta X_{T_{p}}^{(1)}R_{p}^{''(2)})^{2} +(\Delta X_{T_{p}}^{(2)}R_{p}^{''(1)})^{2} \right), \nonumber \\
\widetilde{D} &= \sum_{p: T_{p}\leq 1}\left((\Delta X_{T_{p}}^{(1)}R_{p}^{(2)})^{2} +(\Delta X_{T_{p}}^{(2)}R_{p}^{(1)})^{2} \right), \nonumber \\
L'' &= \sum_{p: T_{p}\leq 1}\left(\sqrt{\zeta_{2}}(\Delta X_{T_{p}}^{(1)})^{2}R_{p}^{''(2)}\widetilde{Z}''_{p} +\sqrt{\zeta_{1}}(\Delta X_{T_{p}}^{(2)})^{2}R_{p}^{''(1)}\overline{Z}''_{p} \right), \nonumber \\
L &= \sum_{p: T_{p}\leq 1}\left(\sqrt{\zeta_{2}}(\Delta X_{T_{p}}^{(1)})^{2}R_{p}^{(2)}\widetilde{Z}_{p} +\sqrt{\zeta_{1}}(\Delta X_{T_{p}}^{(2)})^{2}R_{p}^{(1)}\overline{Z}_{p} \right), \nonumber \\
H &= \int_{0}^{1}\left(c_{s}^{(1,1)}c_{s}^{(2,2)} + 2(c_{s}^{(1,2)})^{2}\right)ds. \nonumber 
\end{align}
By using (\ref{A1}), (\ref{A3}) and (\ref{A4}), in  restriction to the set $\Omega^{(d)}$, we have
\begin{align}
S_{k,2,2}^{(n)}(Y) &= S_{k,2,2}^{(n)}(X) + 2R_{k,2,1}^{(n)}(X) + 2R_{k,1,2}^{(n)}(X) +o_{P}(1/n), \nonumber 
\end{align}
where $R_{k,r,s}^{(n)}(X)$ is defined in (\ref{Rkrs}). 
Moreover, from similar argument of the proof of Theorem 3.1 in \cite{JT(2009)}, in restriction to the set $\Omega^{(d)}$, it is possible to have
\begin{align}
n\times (S^{(n)}_{2,2,2}(X), S^{(n)}_{1,2,2}(X), R^{(n)}_{2,2,1}(X)+R^{(n)}_{2,1,2}(X), R^{(n)}_{1,2,1}(X)+R^{(n)}_{1,1,2}(X)) 
&\stackrel{\mathcal{L}-s}{\longrightarrow} (\widetilde{D}''+2H, \widetilde{D}+H, L'', L). \nonumber 
\end{align}
Therefore, we obtain
\begin{align}
T^{(n)} & 
\stackrel{\mathcal{L}-s}{\longrightarrow} \Phi = {\widetilde{D}'' + 2H + 2L''\over \widetilde{D} + H + 2L}.\nonumber 
\end{align}
Since $(\widetilde{D}'', \widetilde{D}, L'', L)$ admits a density conditionally on $\mathcal{F}$ and on being in $\Omega^{(d)}$, $T^{(n)} \neq 1$ a.s. on $\Omega^{(d)}$.
\end{proof}

\subsection{Proofs for Section 5} 

\begin{proof}[Proof of Proposition \ref{Prp2}]

For the consistency of the SIML, see the proof of Theorem 1 in \cite{KS(2013)}.
\end{proof}

\begin{proof}[Proof of Proposition \ref{Prp3}]

We only give the proof of a consistency of $\widehat{D}^{(l,m)}_{p,q}(1,1)$ and $\widehat{A}^{(l,m)}(r)$. The proofs of consistency of the other estimators are similar.
Under the assumptions of Proposition \ref{Prp3}, for any $\eta>0$, $\alpha>0$, $\theta \in (0, 1/2)$, we have
\begin{align}
&\mathrm{P}\left(\sup_{1\leq i \leq n}1_{\{|\epsilon_{j,n}\Delta v_{i}^{(j)}| > \alpha \Delta_{n}^{\theta}\}} > \eta\right)\nonumber \\ 
&\quad \leq  \mathrm{P}\left(\sum_{i=1}^{n}1_{\{|\epsilon_{j,n}\Delta v_{i}^{(j)}| > \alpha \Delta_{n}^{\theta}\}} > \eta\right) \leq \mathrm{P}\left(\bigcup_{i=1}^{n}\left\{\epsilon_{j,n}|\Delta v_{i}^{(j)}| > \alpha \Delta_{n}^{\theta}\right\}\right) \nonumber \\
&\quad \leq \sum_{i=1}^{n}\mathrm{P}\left(\epsilon_{j,n}|\Delta v_{i}^{(j)}| > \alpha \Delta_{n}^{\theta}\right) \leq 4\sum_{i=1}^{n}\max\left\{\mathrm{P}\left(\epsilon_{j,n}v_{i}^{(j)} > {\alpha \Delta_{n}^{\theta} \over 2}\right), \mathrm{P}\left(\epsilon_{j,n}v_{i}^{(j)} < -{\alpha \Delta_{n}^{\theta} \over 2}\right)\right\} \nonumber \\
&\quad= o(1). \nonumber 
\end{align}
Then $\sup_{1\leq i \leq n}\left|1_{\{||\Delta_{i}^{n}Y||> \alpha \Delta_{n}^{\theta}\}} - 1_{\{||\Delta_{i}^{n}X||> \alpha \Delta_{n}^{\theta}/2\}}\right| \stackrel{\mathrm{P}}{\to} 0. 
$ Therefore, we obtain
\begin{align}
\sum_{i=1}^{n}(\Delta_{i}^{n}Y^{(1)})(\Delta_{i}^{n}Y^{(2)})1_{\{||\Delta_{i}^{n}Y||> \alpha \Delta_{n}^{\theta}\}} - \sum_{i=1}^{n}(\Delta_{i}^{n}Y^{(1)})(\Delta_{i}^{n}Y^{(2)})1_{\{||\Delta_{i}^{n}X||> \alpha \Delta_{n}^{\theta}/2\}} \stackrel{\mathrm{P}}{\to} 0. \nonumber 
\end{align}
Moreover, for $\eta>0$,  
\begin{align}
&\mathrm{P}\left(\left|\epsilon_{1,n}\epsilon_{2,n}\sum_{i=1}^{n}
(\Delta_{i}^{n}X^{(1)})(\Delta_{i}^{n}X^{(2)})(\Delta v_{i}^{(1)})(\Delta v_{i}^{(2)})1_{\{||\Delta_{i}^{n}X||> \alpha \Delta_{n}^{\theta}\}}\right|> \eta\right) \nonumber \\
&\quad \leq {
(\epsilon_{1,n}\epsilon_{2,n})^{2} \over \eta^{2}}E\left[\left(\sum_{i=1}^{n}(\Delta_{i}^{n}X^{(1)})(\Delta_{i}^{n}X^{(2)})1_{\{||\Delta_{i}^{n}X||> \alpha \Delta_{n}^{\theta}\}}(\Delta v_{i}^{(1)})(\Delta v_{i}^{(2)})\right)^{2}\right] \nonumber \\
&\quad \leq {
(\epsilon_{1,n}\epsilon_{2,n})^{2} \over \eta^{2}}\left(\sum_{i=1}^{n}A_{i}^{1,n} + \sum_{i=2}^{n}A_{i}^{2,n} + \sum_{i=1}^{n-1}A_{i}^{3,n}\right), \nonumber 
\end{align}
where 
\begin{align}
A_{i}^{1,n} &= E\left[(\Delta_{i}^{n}X^{(1)})^{2}(\Delta_{i}^{n}X^{(2)})^{2}(\Delta v_{i}^{(2)})^{2}(\Delta v_{i}^{(1)})^{2}\right], \nonumber \\
A_{i}^{2,n} &= E\left[(\Delta_{i}^{n}X^{(1)})(\Delta_{i-1}^{n}X^{(1)})(\Delta_{i}^{n}X^{(2)})(\Delta_{i-1}^{n}X^{(2)})(\Delta v_{i}^{(1)})(\Delta v_{i-1}^{(1)})(\Delta v_{i}^{(2)})(\Delta v_{i-1}^{(2)})\right],\nonumber \\
A_{i}^{3,n} &= E\left[(\Delta_{i+1}^{n}X^{(1)})(\Delta_{i}^{n}X^{(1)})(\Delta_{i+1}^{n}X^{(2)})(\Delta_{i}^{n}X^{(2)})(\Delta v_{i+1}^{(1)})(\Delta v_{i}^{(1)})(\Delta v_{i+1}^{(2)})(\Delta v_{i}^{(2)})\right]. \nonumber
\end{align}

Now we evaluate the last three terms in the above inequality. By H\"older's inequality and the inequalities of It\^o semimartingale, we have $A_{i}^{1,n} = A_{i}^{2,n}  = A_{i}^{3,n} = O(n^{-2})$. 
Therefore,
\begin{align}
\epsilon_{1,n}\epsilon_{2,n}\sum_{i=1}^{n}
(\Delta_{i}^{n}X^{(1)})(\Delta_{i}^{n}X^{(2)})(\Delta_{i}v^{(1)})(\Delta_{i}v^{(2)})1_{\{||\Delta_{i}^{n}X||> \alpha \Delta_{n}^{\theta}\}} \stackrel{\mathrm{P}}{\longrightarrow} 0. \nonumber 
\end{align}
From the similar argument, for $1 \leq l,m \leq 2$, we have 
\begin{align}
\epsilon_{m,n}^{2}\sum_{i=1}^{n}
(\Delta_{i}^{n}X^{(l)})^{2}(\Delta_{i}v^{(m)})^{2}1_{\{||\Delta_{i}^{n}X||> \alpha \Delta_{n}^{\theta}\}} &\stackrel{\mathrm{P}}{\longrightarrow} 0, \nonumber \\
\epsilon_{m,n}\sum_{i=1}^{n}
(\Delta_{i}^{n}X^{(l)})^{2}(\Delta_{i}^{n}X^{(m)})(\Delta_{i}v^{(m)})1_{\{||\Delta_{i}^{n}X||> \alpha \Delta_{n}^{\theta}\}} &\stackrel{\mathrm{P}}{\longrightarrow} 0, \nonumber \\
\epsilon_{l,n}\epsilon_{m,n}^{2}\sum_{i=1}^{n}
(\Delta_{i}^{n}X^{(l)})(\Delta_{i}v^{(l)})(\Delta_{i}v^{(m)})^{2}1_{\{||\Delta_{i}^{n}X||> \alpha \Delta_{n}^{\theta}\}} &\stackrel{\mathrm{P}}{\longrightarrow} 0. \nonumber
\end{align}
Hence we have 
\begin{align}
\sum_{i=1}^{n}(\Delta_{i}^{n}Y^{(1)})(\Delta_{i}^{n}Y^{(2)})1_{\{||\Delta_{i}^{n}Y||>\alpha \Delta_{n}^{\theta}\}} - \sum_{i=1}^{n}(\Delta_{i}^{n}X^{(1)})(\Delta_{i}^{n}X^{(2)})1_{\{||\Delta_{i}^{n}X||>\alpha \Delta_{n}^{\theta}/2\}} \stackrel{\mathrm{P}}{\to} 0. \nonumber 
\end{align}
Then $\sum_{i=1}^{n}(\Delta_{i}^{n}Y^{(1)})(\Delta_{i}^{n}Y^{(2)})1_{\{||\Delta_{i}^{n}Y||>\alpha \Delta_{n}^{\theta}\}} \stackrel{\mathrm{P}}{\to} \sum_{0 \leq s \leq 1}(\Delta X_{s}^{(1)})(\Delta X_{s}^{(2)}).$ Therefore, from Theorem 9.4.1 and Theorem 9.5.1 in \cite{JP(2012)}, we have 
\begin{align}
\sum_{i=1}^{n}\widehat{c}^{(1,2)}_{i}(\Delta_{i}^{n}Y^{(1)})(\Delta_{i}^{n}Y^{(2)})&1_{\{||\Delta_{i}^{n}Y||>\alpha \Delta_{n}^{\theta}\}} \stackrel{\mathrm{P}}{\to} D^{(1,2)}_{1,2}(1,1),\nonumber  \\
\Delta_{n}\sum_{i=1}^{n-k_{n}+1}(\widehat{c}^{(1,2)}_{i})^{r} &\stackrel{\mathrm{P}}{\to}A^{(1,2)}(r). \nonumber
\end{align}
Therefore we obtain the desired results.
\end{proof}

\begin{proof}[Proofs of Corollaries 1 and 2]

The result immediately follows from the remarks in Section 3,  Theorem 2, Propositions \ref{Prp2} and \ref{Prp3}. 
\end{proof}


\section{Figures and tables} 

\begin{figure}[H]
  \begin{center}
    \includegraphics[clip, width=9.0cm]{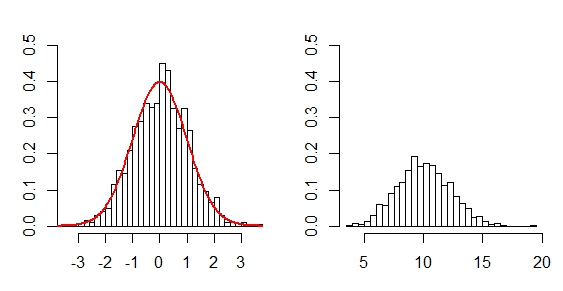}
       \caption{Empirical distributions of RV when $dX_{t} = \sigma_{t}dW_{t}$, $\zeta=10^{-2}$. The left figure corresponds to the bias-variance corrected case implied by Corollary 1(i). The right figure corresponds to the no correction case (statistics are standardized as $\zeta_{1}=0$ in Corollary 1(i)). The red line is the density of a standard normal distribution.} 
       \label{fig:B1}
  \end{center}
\end{figure}

\vspace{-30pt}
\begin{figure}[H]
  \begin{center}
    \includegraphics[clip, width=9.0cm]{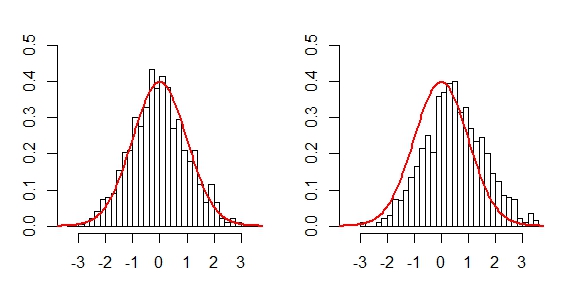}
       \caption{Empirical distributions of RV when $dX_{t} = \sigma_{t}dW_{t} + dJ^{cp}_{t}$, $\zeta = 10^{-2}$. We plot the left and right figures in the same way as Figure 2.} 
       \label{fig:B2}
  \end{center}
\end{figure}


\vspace{-30pt}
\begin{figure}[H]
\begin{center}
\includegraphics[clip, scale=0.4, bb=0 0 400 400]{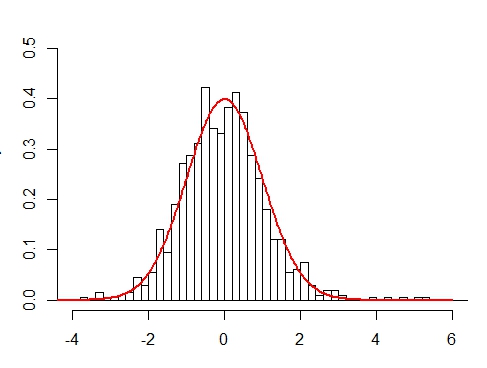}
\caption{Empirical distribution of the co-jump test implied by Corollary 2 in C1-IV. The red line is the density of a standard normal distribution.} 
\label{fig:A}
\end{center}
\end{figure}

\vspace{-20pt}
\begin{table}[H]
\begin{center}
\begin{tabular}{ccccccc}
\hline
Case
 &  \multicolumn{1}{c}{CJ1-\text{(i)}} & \multicolumn{1}{c}{CJ1-\text{(ii)}} & \multicolumn{1}{c}{CJ1-\text{(iii)}}
&  \multicolumn{1}{c}{CJ2-\text{(i)}} & \multicolumn{1}{c}{CJ2-\text{(ii)}} & \multicolumn{1}{c}{CJ2-\text{(iii)}} \\
\hline
RMSE &2.137 & 2.192 & 2.349 
& 1.701  & 1.735 & 1.878 \\ 
\hline \hline
Case
 &  \multicolumn{1}{c}{SJ1-\text{(i)}} & \multicolumn{1}{c}{SJ1-\text{(ii)}} & \multicolumn{1}{c}{SJ1-\text{(iii)}} 
&  \multicolumn{1}{c}{SJ2-\text{(i)}} & \multicolumn{1}{c}{SJ2-\text{(ii)}} & \multicolumn{1}{c}{SJ2-\text{(iii)}} \\
\hline
RMSE & 11.45 & 11.47 & 11.40 
& 9.501  & 9.496 & 9.470  \\ 
\hline
\end{tabular}
\caption{RMSEs of $\widehat{\text{IV}}$. Values are reported as multiples of $10^{-3}$.}
\end{center}
\end{table}

\vspace{-20pt}
\begin{table}[H]
\begin{center}
\begin{tabular}{ccccccccc}
\hline
Case
 &  \multicolumn{1}{c}{C1-\text{I}} & \multicolumn{1}{c}{C1-\text{II}} & \multicolumn{1}{c}{C1-\text{III}} & \multicolumn{1}{c}{C1-\text{IV}} 
&  \multicolumn{1}{c}{C2-\text{I}} & \multicolumn{1}{c}{C2-\text{II}} & \multicolumn{1}{c}{C2-\text{III}} & \multicolumn{1}{c}{C2-\text{IV}} \\
\hline
Size 
&0.068  & 0.064 &0.065 &0.071 
&0.058  &0.053 &0.052 &0.046 \\ 
&0.082  & 0.078 &0.139 &0.253 
&0.069  &0.070 &0.128 &0.237 \\
  
\hline
\hline
Case
&  \multicolumn{1}{c}{D1-\text{I}} & \multicolumn{1}{c}{D1-\text{II}} & \multicolumn{1}{c}{D1-\text{III}} & \multicolumn{1}{c}{D1-\text{IV}} 
&  \multicolumn{1}{c}{D2-\text{I}} & \multicolumn{1}{c}{D2-\text{II}} & \multicolumn{1}{c}{D2-\text{III}} & \multicolumn{1}{c}{D2-\text{IV}} \\
\hline 
 Power  
&0.989 &0.992  &0.990 &0.982 
&0.995 &0.996  &0.995 &0.988  \\
&0.986 &0.994  &0.989 &0.985 
&0.987 &0.984  &0.986 &0.990  \\
\hline

\end{tabular}
\caption{Empirical size and power of the co-jump test ($5\%$ significant level) are reported for the proposed test (top), and the test proposed in \cite{JT(2009)} (bottom).}
\end{center}
\end{table}

\section*{Acknowlegement}
The author would like to thank Professor Naoto Kunitomo and Associate Professor Kengo Kato for their suggestions and encouragements, as well as Professor Yasuhiro Omori for his constructive comments.


\begin{thebibliography}{99}

\bibitem[A\"\i t-Sahalia and Jacod(2009)]{AJ(2009)}
A\"\i t-Sahalia, Y. and J. Jacod (2009), Testing for jumps in a discretely observed process.
\textit{Ann. Statist.}
37-1, 184-222. 






\bibitem[A\"\i t-Sahalia, Jacod and Li(2012)]{AJL(2012)}
A\"\i t-Sahalia, Y., J. Jacod and J. Li (2012), 
Testing for jumps in noisy high-frequency data.
\textit{J.Econometrics} 168-2, 207-222.




\bibitem[A\"\i t-Sahalia and Yu(2009)]{AY(2009)}
A\"\i t-Sahalia, Y. and J. Yu (2009), 
High-frequency market microstructure noise estimated and liquidity measures.
\textit{Ann. Appl. Statist.} 3-1, 422-457. 




\bibitem[Bandi and Russel(2006)]{BR(2006)}
Bandi, F. M. and J. R. Russel (2006), Separating microstructure noise from volatility.
\textit{J. Financial Econometrics} 79-3, 655-692.  


\bibitem[Barndorff-Nielsen et al.(2006)]{BGJPS(2006)}
Barndorff-Nielsen, O. E., S. E. Graversen, J. Jacod, M. Podolskij and N. Shephard (2006), A central limit theorem for realised power and bipower variations of continuous semimartingales.
\text{in: From Stochastic Analysis to Mathematical Finance}, Festschrift for Albert Shiryaev, Springer, 2006.


\bibitem[Barndorff-Nielsen and Shephard(2004)]{BS(2004)}
Barndorff-Nielsen, O. E. and N. Shephard (2004), Power and bipower variation with stochastic volatility and jumps. 
\textit{J. Financial Econometrics} 2-1, 1-48.


\bibitem[Barndorff-Nielsen and Shephard(2006)]{BS(2006)}
Barndorff-Nielsen, O. E. and N. Shephard (2006), Economatrics of testing for jumps on financial econometrics using bipower variation. 
\textit{J. Econometrics} 4-1, 1-30.


\bibitem[Barndorff-Nielsen, et al.(2008)]{BHLS(2008)}
Barndorff-Nielsen, O. E., P. R. Hansen, A. Lunde and N. Shephard (2008), Designing realized kernels to measure the ex post variation of equity prices in the presence of noise.
\textit{Econometrica}, 76, 1481-1536.


\bibitem[Bibinger and Reiss(2014)]{BR(2014)} 
Bibinger, M. and M. Reiss (2014), Spectral estimation of covolatility from noisy observations using local weights.
\textit{Scand. J. Statist.} 41, 23-50.


\bibitem[Bibinger and Winkelmann(2015)]{BW(2015)}
Bibinger, M. and L. Winkelmann (2015), Econometrics of co-jumps in high-frequency data with noise.
\textit{J. Econometrics} 184-2, 361-378.


\bibitem[Bollerslev, Law and Tauchen(2008)]{B(2008)}
Bollerslev, T., T. H. Law, G. Tauchen (2008), Risk, jumps, and diversification.
\textit{J. Econometrics} 144-1, 234-256.



\bibitem[Bollerslev, Todorov and Li(2013)]{BTL(2013)}
Bollerslev. T., V. Todorov and S. Z. Li (2013), 
Jump tails, extreme dependencies, and the distribution of stock returns. 
\textit{J. Econometrics} 172-2, 307-324.


\bibitem[Cont and Tankov(2004)]{CT(2004)}
Cont, R. and P. Tankov (2004),
\textit{Financial Modeling with Jump Process},
Chapman \& Hall.


\bibitem[Fan and Wang(2007)]{FW(2007)}
Fan, J. and Y. Wang (2007), Multi-scale jump and volatility analysis for high-frequency data.
\text{J. Amer. Statist. Assoc.} 102-480, 1349-1362.


\bibitem[Gilder, Shackleton and Taylor(2014)]{GST(2014)}
Glider, D., M. B. Shackleton and S. J. Taylor (2014), 
Cojumps in stock prices: Empirical evidence. 
\textit{J. Banking. Finance} 40, 443-459.

\bibitem[Gloter and Jacod(2001a,b)]{GJ(2001a)}
Gloter, A. and J. Jacod (2001a), Diffusions with measurement errors. I-Local asymptotic normality. 
\textit{ESAIM: Probability and Statistics} 5, 225-242. 

\bibitem[Gloter and Jacod(2001b)]{GJ(2001b)}
Gloter, A. and J. Jacod (2001b), Diffusions with measurement errors. II-Optimal estimators.
\textit{ESAIM: Probability and Statistics} 5, 243-260. 



\bibitem[Hansen and Lunde(2006)]{HL(2006)}
Hansen, P. R. and A. Lunde (2006), Realized variance and market microstructure noise. 
\textit{J. Business. Econ. Statist.} 24-2, 127-161. 



\bibitem[H\"ausler and Luschgy(2015)]{HL(2015)}
H\"ausler, E. and H. Luschgy (2015), 
\textit{Stable Convergence and Stable Limit Theorems.} Springer.







\bibitem[Jacod(2008)]{J(2008)}
Jacod, J. (2008), Asymptotic properties of realized power variations and related functionals of semimartingales.
\textit{Stochastic Process. Appl.} 118-4, 517-559.





\bibitem[Jacod et al.(2009)]{JLMPV(2009)}
Jacod, J., Y. Li, P. A. Mykland, A. Podolskij and M. Vetter (2009), 
Microstructure noise in the continuous case: the preaveraging approach. 
\textit{Stochastic Process. Appl.} 119-7, 2249-2276. 



\bibitem[Jacod and Protter(2012)]{JP(2012)}
Jacod, J. and P. Protter, (2012), 
\textit{Discretization of Processes}, Springer. 


\bibitem[Jacod and Todorov(2009)]{JT(2009)}
Jacod, J. and V. Todorov (2009), Testing for common arrivals of jumps for discretely observed multidimensional processes. 
\textit{Ann. Statist.}
37-4, 1792-1838. 


\bibitem[Jiang and Oomen(2008)]{JO(2008)}
Jiang, G. J. and R. C. A. Oomen (2008), Testing for jumps when asset prices are observed with noise-a "swap variance" approach.
\textit{J. Econometrics} 144-2, 352-370.






\bibitem[Kunitomo and Kurisu(2015)]{KK(2015)}
Kunitomo, N. and D. Kurisu (2015), On effects of jump and noise in high-frequency financial econometrics. Discussion Paper, CIRJE-F-996, University of Tokyo.


\bibitem[Kunitomo and Sato(2010)]{KS(2010)}
Kunitomo, N. and S. Sato (2010), Robustness of the separating information maximum likelihood method estimation of realized volatility with micro-market noise. CIRJE Discussion Paper F-733, University of Tokyo.



\bibitem[Kunitomo and Sato(2011)]{KS(2011)}
Kunitomo, N. and S. Sato (2011), The SIML estimation of realized volatility of Nikkei-225 futures and hedging coefficient with micro-market noise. 
\textit{Mathematics and Computers in Simulation}, 81-7, 1272-1289.



\bibitem[Kunitomo and Sato(2013)]{KS(2013)}
Kunitomo, N. and S. Sato (2013),  Separating information maximum likelihood estimation of the integrated volatility and covariance with micro-market noise.
\textit{The North Amer. J. Economics. Finance} 26, 282-309.





\bibitem[Li(2013)]{L(2013)}
Li, J. (2013), Robust estimation and inference for jumps in noisy high-frequency data: A local-to-continuity theory for the pre-averaging method.
\textit{Econometrica} 81-4, 1673-1693.

\bibitem[Li, Xie and Zheng(2014)]{LXZ(2014)}
Li, Y., S. Xie and X. Zheng (2014), Efficient estimation of integrated volatility incorporating trading information. working paper. 


\bibitem[Li, Zhang and Li(2015)]{LZL(2015)}
Li, Y., Z. Zhang and Y. Li (2015), A unified approach to volatility estimation in the presence of both rounding and random market microstructure noise. working paper.


\bibitem[Mancini(2009)]{M(2009)}
Mancini, C. (2009), Non-parametric threshold estimation for models with stochastic diffusion coefficient and jumps. 
\textit{Scand. J. Statist.} 36-2, 270-296.


\bibitem[Mancini and Gobbi(2012)]{GM(2012)}
Mancini, C. and F. Gobbi (2012), Identifying the Brownian covariation from the co-jumps given discrete observations.
\textit{Econometric Theory} 28-2, 249-273.




\bibitem[Xiu(2010)]{X(2010)}
Xiu, D. (2010), Quasi-maximum likelihood estimation of volatility with high frequency data.
\textit{J. Econometrics} 159, 235-250.


\bibitem[Zhang, Mykland and A\"\i t-Sahalia(2005)]{ZMA(2005)}
Zhang, L., P. A. Mykland and Y. A\"\i t-Sahalia (2005), A tale of two time series: determining integrated volatility with noisy high-frequency data. 
\textit{J. Amer. Statist. Assoc.} 100-472, 1394-1411.




\end{thebibliography}
\end{document}